    \def\ps@pprintTitle{%
  \def\@oddfoot{\reset@font\hfil\thepage\hfil\textit{\today}}
                       }
\newtheorem{theorem}{Theorem}
\newtheorem{lemma}{Lemma}
\newtheorem{corollary}{Corollary}
\theoremstyle{definition}
\theoremstyle{remark}
\def\emphasize#1{\textit{#1}}
\def\mmod#1{\!\!\!\!\pmod{#1}}
\newcommand{\Z}{\ensuremath{{\mathcal Z}}}
\newcommand{\zn}{\ensuremath{\Z_n}}
\newcommand{\zk}{\ensuremath{\Z_k}}
\newcommand{\deter}[4]{\left\vert\begin{array}{rr} #1 & #2\\ #3 & #4 \end{array}\right\vert}
\newcommand{\mmatrix}[4]{\left(\begin{array}{rr} #1 & #2\\ #3 & #4 \end{array}\right)}
\newcommand{\ax}{\ensuremath{(x,x;5,0,0)}}\newcommand{\ay}{\ensuremath{(y,y;5,0,0)}}
\newcommand{\axm}{\ensuremath{(x,-x;5,0,0)}}\newcommand{\aym}{\ensuremath{(y,-y;5,0,0)}}
\newcommand{\bx}{\ensuremath{(x,-x;0,0,1)}}\newcommand{\by}{\ensuremath{(y,-y;0,0,1)}}
\newcommand{\cx}{\ensuremath{(x,x;1,0,1)}}\newcommand{\cy}{\ensuremath{(y,y;1,0,1)}}
\newcommand{\cxm}{\ensuremath{(-x,\alpha^4 x;0,9,1)}}\newcommand{\cym}{\ensuremath{(-y,\alpha^4 y;0,9,1)}}
\newcommand{\dx}{\ensuremath{(x,0;5,0,1)}}\newcommand{\dy}{\ensuremath{(y,0;5,0,1)}}
\newcommand{\dxm}{\ensuremath{(0,x;0,5,1)}}\newcommand{\dym}{\ensuremath{(0,y;0,5,1)}}
\newcommand{\ex}{\ensuremath{(x,\alpha^3 x;1,3,1)}}\newcommand{\ey}{\ensuremath{(y,\alpha^3 y;1,3,1)}}
\newcommand{\exm}{\ensuremath{(-x,\alpha^4 x;7,9,1)}}\newcommand{\eym}{\ensuremath{(-y,\alpha^4 y;7,9,1)}}
\newcommand{\fx}{\ensuremath{(x,x;1,7,1)}}\newcommand{\fy}{\ensuremath{(y,y;1,7,1)}}
\newcommand{\fxm}{\ensuremath{(-\alpha^3 x,\alpha^4 x;3,9,1)}}\newcommand{\fym}{\ensuremath{(-\alpha^3 y,\alpha^4 y;3,9,1)}}
\newcommand{\gx}{\ensuremath{(0,x;5,2,1)}}\newcommand{\gy}{\ensuremath{(0,y;5,2,1)}}
\newcommand{\gxm}{\ensuremath{(\alpha^3 x,0;8,5,1)}}\newcommand{\gym}{\ensuremath{(\alpha^3 y,0;8,5,1)}}
\newcommand{\hx}{\ensuremath{(x,\alpha x;3,2,0)}}\newcommand{\hy}{\ensuremath{(y,\alpha y;3,2,0)}}
\newcommand{\hxm}{\ensuremath{(\alpha^2 x,\alpha^4 x;7,8,0)}}\newcommand{\hym}{\ensuremath{(\alpha^2 y,\alpha^4 y;7,8,0)}}
\newcommand{\kx}{\ensuremath{(x,\alpha x;4,1,0)}}\newcommand{\ky}{\ensuremath{(y,\alpha y;4,1,0)}}
\newcommand{\kxm}{\ensuremath{(\alpha x,\alpha^4 x;6,9,0)}}\newcommand{\kym}{\ensuremath{(\alpha y,\alpha^4 y;6,9,0)}}
\newcommand{\Group}{\thegroup. \addtocounter{group}{1}}
\journal{European Journal of Combinatorics}
\begin{document} 
\def\xa{20800} \def\xb{21221} \def\xc{4.2\cdot 10^6} \def\xd{10^{10}} \def\eps{0.002785} \def\delt{0.0055856} \def\deltA{1.0055856} \def\gam{1.412480}
\def\siran{\v Sir\'a\v n}\def\siagi{\v Siagiov\'a}\def\zdima{\v Zd\'\i malov\'a}
\def\koef{0.684} \def\xasharp{20801} \def\dsharp{353616} \def\dsharpPrime{360756}
\hyphenation{ge-ne-ra-tors ge-ne-ra-ting ge-ne-ra-ted}
\newcounter{group}
\begin{frontmatter}

\title{Cayley graphs of diameter two with order greater than \koef\ of the Moore bound for any degree}

\author[rvt]{Marcel Abas\corref{cor1}}
\ead{abas@stuba.sk}
\cortext[cor1]{Corresponding author}

\address[rvt]{Institute of Applied Informatics, Automation and Mathematics,\\
Faculty of Materials Science and Technology in Trnava,\\
Slovak University of Technology in Bratislava, Trnava, Slovakia}

\begin{abstract}\label{abstract} 
It is known that the number of vertices of a graph of diameter two cannot exceed $d^2+1$. In this contribution we give a new lower bound for orders of Cayley graphs of 
diameter two in the form 
$C(d,2)>\koef d^2$ valid for all degrees $d\geq\dsharpPrime$. The result is a significant improvement of currently known results on the orders of Cayley graphs of diameter two.
\end{abstract}

\begin{keyword}
Degree; Diameter; Moore bound; Cayley graph, Networks.
\end{keyword}

\end{frontmatter}
\section{Introduction}\label{sec_introduction}

Networks (optical, interconnection, etc.) are usually modeled by graphs (digraphs), where nodes of the network are represented by vertices and communication lines 
are represented by undirected (directed) edges. Therefore the designing of networks is closely linked to constructing (di)graphs with some preassigned properties. Obviously, the basic limitation on any network is the number of communication lines connected to a node and the maximum communication delay. These two parameters correspond to the degree 
(in/out degree) of the corresponding vertex and to the diameter of the (di)graph, respectively.

From now on, by graph we will mean a graph, which is finite, connected and simple (i.e. undirected, without loops and multiple edges).
In graph theory, the problem to find the largest order $n(d,k)$ of a graph with given maximum degree $d$ and diameter $k$ is known as \emphasize{degree--diameter} problem. The well known upper bound on the number $n(d,k)$ is \emphasize{Moore bound} which gives  
$n(d,k)\leq 1+d+d(d-1)+d(d-1)^2+\cdots+d(d-1)^{k-1}$ for every $d, k\geq1$. The graphs satisfying the Moore bound are known as \emphasize{Moore graphs}. Except $k=1$ or $d\leq2$ there are only few graphs achieving the Moore bound. For $k=1,d\geq 1$ we have $n(d,1)=d+1$ (achieved by complete graphs $K_{d+1}$) and for $d=2,k\geq 1$ we have
$n(2,d)=2d+1$ (odd-length cycles $C_{2k+1}$). In \cite{HS1960} Hoffman and Singleton proved that for $k=2$ there exists Moore graph only for degrees $d\in\{2,3,7\}$ and (maybe)
for $d=57$. For the first three values of $d$ the corresponding Moore graph is unique. It is the 5-cycle $C_5$ for $d=2$, Petersen graph for $d=3$ and Hoffman--Singleton graph for 
$d=7$. Until now it is not known yet if a Moore graph of degree $d=57$ and diameter $k=2$ on 3250 vertices does exist. However, it was shown by Cameron \cite{C1999} that if there is a Moore graph for $d=57$ and $k=2$, the graph is not vertex-transitive, in contrast to the fact that the other Moore graphs are vertex-transitive. 
In addition, Ma\v caj and \v Sir\'a\v n proved \cite{MS2010} that the order of the automorphism group of the (hypothetical) Moore graph is at most 375. 
It was shown by Damerell \cite{D1973} that for $d\geq 3$ and $k\geq 3$ there is no Moore graph. That is for other combinations of degrees and diameters there are no other Moore graphs.
For a summary on the history and development on this topic we refer to survey paper of Miller and \v Sir\'a\v n \cite{MS2005}.

The Moore bound for diameter two is $n(d,2)\leq d^2+1$, and it was shown by Erd\"os, Fajtlowicz and Hoffman \cite{EFH1980} that for $d\geq4$, $d\ne7$ and $d\ne 57$, the bound 
is $n(d,2)\leq d^2-1$. An explicit lower bound $n(d,2)\geq d^2-d+i$ is given by Brown's graphs \cite{B1996} for all $d$ such that $d-1$ is a prime power and $i=2$ for $d-1$ even and $i=1$ for $d-1$ odd. A modification of Brown's graphs constructed by  \siran, \siagi\ and \zdima\ \cite{SSZ2010} gives the lower bound $n(d,2)\geq d^2-2d^{1.525}$ for all sufficiently large $d$.
Since neither the Brown's graphs nor their modification are vertex-transitive, it is a natural question to ask what is the maximum number of vertices  
of a vertex-transitive graph or a Cayley graph of diameter two and degree $d$. These numbers we will denote by $v(d,2)$ and $C(d,2)$, respectively. As the Cayley graphs are 
vertex-transitive, we have $n(d,2)\geq v(d,2)\geq C(d,2)$ for each degree $d$.

Currently, the best known construction of vertex-transitive graphs are {McKay-Miller-\v Sir\'a\v n} graphs \cite{MMS1998} which give $v(d,2)\geq\frac89(d+\frac12)^2$, for degrees 
$d=\frac12(3q-1)$ such that $q\equiv 1\pmod{4}$ is a prime power. In the same paper the authors have shown that all these graphs are non-Cayley.

For Cayley graphs we have the following results. Until recently, the best known lower bound valid for all degrees $d$ there was a folklore bound giving
$C(d,2)\geq \frac14 d^2+d+1$ for even $d$ and $\frac14 d^2+d+\frac34$ for odd $d$. \siagi\ and \siran\ in \cite{SS2005} constructed Cayley graphs of diameter two and of order $\frac12(d+1)^2$ for all degrees $d=2q-1$ where $q$ is an odd prime power and the same authors gave a construction \cite{SS2012} of Cayley graphs of diameter two and of order $d^2-O(d^{\frac32})$ for an infinite set of degrees $d$ of a very special type. Very recently Abas \cite{A2014, A2015} has shown that for all degrees $d\geq4$ we have $C(d,2)\geq\frac12 d^2$. 

In this contribution we show that for every degree $d=17n-1$, $n\geq 1$, such that $n\equiv 1\pmod{10}$ is a prime, there is a Cayley graph of diameter two and of order $\frac{200}{289}(d+1)^2$. Using explicit estimates for the distribution of primes in arithmetic progressions
we show that for every $d\geq\dsharpPrime$ there is a Cayley graph of diameter two and of order greater than $\koef d^2$. This is a significant improvement of the known results valid for all degrees. In addition, we show that our construction provides infinitely many currently largest known Cayley graphs of diameter two.
\section{Preliminaries}\label{sec_preliminaries}

Let $\Gamma$ be a finite group and let $X\subset\Gamma$ be a unit-free, inverse-closed generating set for $\Gamma$. That is, $1_\Gamma\not\in X$, $X=X^{-1}$ and 
$\Gamma=\langle X\rangle$. 
The Cayley graph $G=Cay(\Gamma,X)$ for the \emphasize{underlying} group $\Gamma$ and the \emphasize{generating} set $X$ is a graph with vertex set 
$V(G)=\Gamma$ and edge set $E(G)=\{\{g,h\}\vert g\in\Gamma, g^{-1}h\in X\}$. Since the set $X$ is inverse closed, $g^{-1}h\in X$ imply $h^{-1}g\in X$ and therefore our Cayley graphs are undirected. Note that the mapping $\varphi_h:V(G)\to V(G)$ defined by $\varphi_h(g)=hg,\ g\in V(G)=\Gamma$, is a graph automorphism for each $h\in \Gamma$. It follows that every Cayley graph is vertex-transitive.

Throughout this paper, we will denote by $\zn$ the cyclic group of order $n$, by $\zn^2=\zn\times\zn$ the direct product of $\zn$ with itself, and by $\zn^{\times}$ the multiplicative group of units modulo $n$. We will write the cyclic group $\zn$ as an additive group with elements $0,1,\dots,n-1$, with identity element $0$ and with $-x$ as the inverse element to $x$. The elements of $\zn^2$ will be written in the form $(x,y),\ x,y\in\zn$, and the product of two elements of $\zn^2$ will be $(x_1,y_1)\cdot(x_2,y_2)=(x_1+x_2,y_1+y_2)$. 

Let $\alpha\in\zn^{\times}$ and let $A:\zn^2\to\zn^2$ be an automorphism of $\zn^2$ given by $A: (x,y)\to(\alpha x,y)$. Clearly, if the order of $\alpha$ in $\zn^{\times}$ is $k$ then $k$ is the order of the automorphism $A$.
Let $B:\zn^2\to\zn^2$ be another automorphism of $\zn^2$ given by $B: (x,y)\to(y,x)$ and let $\Delta=\Delta(n,\alpha)$ be the group generated by $A$ and $B$. That is, $\Delta(n,\alpha)=\langle A,B\rangle$. 
One can see that if the order of $\alpha\in\zn^{\times}$ is the same as the order of $\alpha'\in{\mathcal Z}_{n'}^{\times}$, say $k$, then the groups $\Delta(n,\alpha)$ and $\Delta(n',\alpha')$ are isomorphic.
Therefore the structure of $\Delta(n,\alpha)$ depends only on the order $k$ of $\alpha$ in $\zn^{\times}$ and we will write it simply as $\Delta_k$. The group $\Delta_k$ has a presentation $\Delta_k=\langle A,B\vert A^k=B^2=(AB)^2(A^{-1}B)^2=1_{\Delta_k}\rangle$. 

Now suppose we have a group with presentation $\langle a,b,c\vert a^k=b^k=c^2, ba=ab, cb=ac\rangle$. One can see that this group is isomorphic to the group $\Delta_k$ via 
the mapping $A\to a, BAB\to b$ and $B\to c$. Elements of the group $\langle a,b,c\rangle$ can be written in the form $(x,y,i)$, where $x,y\in\zk$ and $i\in\Z_2$. Therefore the group $\Delta_k$ is isomorphic to semidirect product of $\zk^2$ with $\Z_2$, that is $\Delta_k=\zk^2\rtimes \Z_2$, where the non-identity element of $\Z_2$ interchanges the coordinates of elements of $\zk^2$. We will write the elements of $\Delta_k$ as triples $(x,y,i)$, where $x,y\in\{0,1,\dots,k-1\}$ and $i\in\{0,1\}$. The product of two elements of 
$\Delta_k$ is given by $(x_0,x_1,i)\cdot(y_0,y_1,j)=(x_0+y_i,x_1+y_{1-i},i+j)$, where the first two coordinates are taken modulo $k$ and the last coordinate is taken modulo 2.
The inverse element to $(x_0,x_1,i)$ is $(-x_i,-x_{1-i},i)$. Clearly, the order of the group $\Delta_k$ is $2k^2$.

Let $\alpha\in\zn^\times$ have order $k$. We will denote the semidirect product $\zn^2\rtimes_\alpha\Delta_k$ by the symbol  
$\Gamma(n,\alpha)$ (obviously, $\Gamma$ is uniquely determined by $n$ and $\alpha$). For the automorphisms $a,b,c\in\Delta_k$ of $\zn^2$ then we have:
$a:(x,y)\to (\alpha x,y)$, $b:(x,y)\to (x,\alpha y)$ and $c:(x,y)\to (y,x)$. It is easy to see that the order of the group $\Gamma(n,\alpha)$ is $2n^2k^2$. We will write the elements of $\Gamma(n,\alpha)$ as quintuples $(x_0,x_1;y_0,y_1,i),\ x_0,x_1\in\zn,\ y_0,y_1\in\zk,\ i\in\Z_2$. Since the automorphism $(y_0,y_1,i)\in\Delta_k$ maps an element $(x_0,x_1)$ of $\zn^2$ to the element $(\alpha^{y_0}x_i,\alpha^{y_1}x_{1-i})$, for the product of two elements of $\Gamma(n,\alpha)$ we have 
\scalebox{.9}{$(x_0,x_1;y_0,y_1,i)\cdot(x'_0,x'_1;y'_0,y'_1,i')=(x_0+\alpha^{y_0}x'_{i},x_1+\alpha^{y_1}x'_{1-i};y_0+y'_i,y_1+y'_{1-i},i+i')$},\\ 
where the first two coordinates are taken modulo $n$, the second two are taken modulo $k$ and the last coordinate is taken modulo 2.

In the following two lemmas we show that prime numbers of the form $p\equiv 1\pmod{10}$ have some special
properties. We will need the lemmas in the proof of Theorem \ref{th_group_200}.
 
\begin{lemma}\label{lem_alpha_p}
Let $p=10s+1$, $s\geq1$, be a prime number. Then there exists an integer $\alpha$, $1<\alpha<p-1$, such that:\\

\noindent
i)\phantom{ii} $\alpha^{10}\equiv 1\pmod{p}$,\\ 
ii)\phantom{i} $gcd(\alpha^i-1,p)=1$, $2\leq i\leq 5$,\\
iii) $gcd(\alpha^i+1,p)=1$, $i=3,4$,\\
iv)\phantom{i} $\alpha^5\equiv -1\pmod{p}$.
\end{lemma}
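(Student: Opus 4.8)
The plan is to take $\alpha$ to be a representative of an element of order exactly $10$ in the multiplicative group $\Z_p^{\times}$, and then to observe that conditions (i)--(iv) are all immediate consequences of that single fact. Such an element exists because $p$ is prime, so $\Z_p^{\times}$ is cyclic of order $p-1=10s$, and a cyclic group whose order is divisible by $10$ contains an element of order $10$ (concretely $g^{s}$, where $g$ generates $\Z_p^{\times}$). I would then fix $\alpha\in\{0,1,\dots,p-1\}$ to be the least nonnegative residue of such an element.

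For the range $1<\alpha<p-1$, I would note that $\alpha\not\equiv 0\pmod p$ (it is a unit), $\alpha\not\equiv 1\pmod p$ (its order is $10\ne 1$), and $\alpha\not\equiv p-1\equiv -1\pmod p$ (the order of $-1$ is $2\ne 10$, using $p>2$), so $\alpha\in\{2,\dots,p-2\}$. Condition (i) is then nothing but the defining property $\alpha^{10}\equiv 1\pmod p$. For (iv), the element $\alpha^{5}$ has order $10/\gcd(10,5)=2$ in $\Z_p^{\times}$, and the unique element of order $2$ in a cyclic group of even order is $-1$, so $\alpha^{5}\equiv -1\pmod p$.

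Finally, since $p$ is prime, for any integer $m$ one has $\gcd(m,p)\in\{1,p\}$, with $\gcd(m,p)=p$ exactly when $p\mid m$; hence (ii) says precisely that $\alpha^{i}\not\equiv 1\pmod p$ for $2\le i\le 5$, and (iii) says precisely that $\alpha^{i}\not\equiv -1\pmod p$ for $i=3,4$. Since $\mathrm{ord}(\alpha)=10$, we have $\alpha^{i}\equiv 1$ iff $10\mid i$, which fails for $i\in\{2,3,4,5\}$, giving (ii); and, using $\alpha^{5}\equiv -1$ from the previous step, $\alpha^{i}\equiv -1$ is equivalent to $\alpha^{i}\equiv\alpha^{5}$, i.e.\ to $\alpha^{i-5}\equiv 1$, i.e.\ to $i\equiv 5\pmod{10}$, which fails for $i\in\{3,4\}$, giving (iii). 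There is essentially no hard step here: the lemma is just a repackaging of the cyclicity of $\Z_p^{\times}$ together with $10\mid p-1$, and the only points requiring care are converting the gcd conditions into non-congruences (where primality of $p$ is used) and verifying that the chosen representative of $\alpha$ lies strictly between $1$ and $p-1$.
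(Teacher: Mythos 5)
Your proposal is correct and follows essentially the same route as the paper: both take $\alpha=g^{s}$ for a generator $g$ of $\Z_p^{\times}$ (so that $\alpha$ has order exactly $10$) and then read off i)--iv) from that single fact, with only cosmetic differences (e.g.\ the paper derives iii) by squaring to get $\alpha^{2i}\equiv 1$ for $2i\in\{6,8\}$, and iv) by factoring $\alpha^{10}-1$, while you invoke the uniqueness of the order-$2$ element). No gap.
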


\begin{proof}
Let $a$ be a primitive root of $p$. The multiplicative order of $a$ modulo $p$ is equal to $\varphi(p)=10s$ 
(note that $\varphi(n)$ is the Euler's totient function -  the number of positive integers less than or equal to $n$ that are relatively prime to $n$). 
Let $\alpha=a^{s}$. Since $\alpha^2=a^{2s}\not\equiv 1\pmod{p}$, or equivalently $gcd(\alpha^2-1,p)=1$, we have 
$\alpha\not\equiv\pm1\pmod{p}$ and consequently $1<\alpha<p-1$. We claim that $\alpha$ has the required properties i), ii), iii) and iv).\\

\noindent
i) $\alpha^{10}\equiv 1\pmod{p}$: Clearly, $\alpha^{10}=a^{10s}=a^{\varphi(p)}\equiv 1\pmod{p}$.\\
ii) $gcd(\alpha^i-1,p)=1$, $2\leq i\leq 5$: It follows immediately from the fact that $\alpha$ is a power of a primitive root of $p$.\\
iii) $gcd(\alpha^i+1,p)=1$, $i=3,4$: If $gcd(\alpha^i+1,p)>1$ then $\alpha^i\equiv -1\pmod p$ and consequently $\alpha^{2i}\equiv 1\pmod p$, for $2i=6$ or $8$, which is a contradiction with the definition of $\alpha$.\\
iv) $\alpha^5\equiv -1\pmod{p^k}$: From $\alpha^{10}\equiv 1\pmod{p^k}$ we have $\alpha^{10}-1=(\alpha^5+1)(\alpha^5-1)\equiv 0\pmod{p^k}$. 
Since $\alpha^5\not\equiv 1\pmod{p}$, and $p$ is a prime, it follows that $\alpha^5\equiv -1\pmod{p}$.
\end{proof}

\begin{lemma}\label{lem_Lambda}
Let $p$ and $\alpha$ be as in the previous lemma and
let $\Lambda=\Lambda_1\cup\Lambda_2$, where $\Lambda_1=\{\pm\alpha^i,\pm2\alpha^i\vert 0\leq i\leq 4\}$ and 
$\Lambda_2=\{\pm\alpha^i\pm\alpha^j\vert i,j=0,1,2,3,4, i\ne j\}$. Then every element of $\Lambda$ is coprime with $p$.
\end{lemma}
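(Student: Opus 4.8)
The plan is to check that no integer appearing in $\Lambda$ is a multiple of $p$; as $p$ is prime this is the same as being coprime with $p$, so it suffices to work modulo $p$. Note first that $p=10s+1\geq 11$ is odd, so $2$ is a unit modulo $p$, and by part i) of Lemma \ref{lem_alpha_p} the element $\alpha$ --- and hence every power $\alpha^i$ --- is a unit modulo $p$ as well. Every element of $\Lambda_1$ is $\pm1$ or $\pm2$ times some $\alpha^i$, i.e. a product of units modulo $p$, and is therefore coprime with $p$. This settles $\Lambda_1$ at once.

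For $\Lambda_2$ I would first cut down the number of cases. Divisibility by $p$ is unaffected by an overall sign change or by swapping $i$ and $j$, so we may assume $i>j$; then $\alpha^i\pm\alpha^j=\alpha^j(\alpha^{\,i-j}\pm1)$ with $\alpha^j$ a unit. Hence it is enough to show that $\alpha^m-1\not\equiv0\pmod p$ and $\alpha^m+1\not\equiv0\pmod p$ for each $m$ in $\{1,2,3,4\}$ (the possible values of $i-j$).

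For $\alpha^m-1$: the case $m=1$ follows from $1<\alpha<p-1$, which forces $\alpha\neq1$; the cases $m=2,3,4$ are precisely part ii) of Lemma \ref{lem_alpha_p}. For $\alpha^m+1$: the case $m=1$ again uses $1<\alpha<p-1$, giving $\alpha\neq p-1$ and so $\alpha\not\equiv-1\pmod p$; the cases $m=3,4$ are part iii) of Lemma \ref{lem_alpha_p}; and for $m=2$, were $\alpha^2\equiv-1\pmod p$ then squaring would yield $\alpha^4\equiv1\pmod p$, contradicting part ii) of Lemma \ref{lem_alpha_p} taken with $i=4$. Assembling these subcases proves the statement for $\Lambda_2$.

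Since the whole argument is a short finite case analysis resting on the previous lemma, I do not expect a real obstacle; the only subcase that is not purely mechanical is $\alpha^2+1$, where $\alpha^2\equiv-1$ must be excluded indirectly through the fact that $\alpha^4\not\equiv1\pmod p$.
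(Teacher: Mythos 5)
Your proof is correct and follows essentially the same route as the paper's: dispose of $\Lambda_1$ by observing that $\pm\alpha^i$ and $\pm2\alpha^i$ are products of units, and reduce $\Lambda_2$ to $\alpha^{m}\pm1$ with $m=i-j\in\{1,2,3,4\}$ by factoring out $\alpha^{j}$. You are in fact somewhat more careful than the paper, which simply cites Lemma \ref{lem_alpha_p} for all of $\pm\alpha^{i}\pm1$ even though items ii)--iii) do not literally cover $\alpha\pm1$ or $\alpha^{2}+1$; your explicit treatment of those subcases (in particular ruling out $\alpha^{2}\equiv-1\pmod p$ via $\alpha^{4}\not\equiv1\pmod p$) closes that small gap.
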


\begin{proof}$ $\\
1) Let $\lambda\in\Lambda_1$. Since $2,-2$ and $\alpha^i,-\alpha^i$ are coprimes with $p$, the result follows.\\ 
2) Let $\lambda\in\Lambda_2$ and let $j=0$. That is $\lambda=\pm\alpha^i\pm1$. It follows from the previous lemma that $gcd(\lambda,p)=1$.\\
3) Let $i,j>0$ and let (without loss of generality) $j\leq i$. Then $\lambda=\pm\alpha^i\pm\alpha^j=\alpha^j\cdot(\pm\alpha^{i'}\pm1)$, where $0\leq i'\leq 3$.
Since both factors of the product are coprime with $p$, the lemma is proven.
\end{proof}
We will use Corollary \ref{cor_solution} of the following lemma in the proof of Theorem~\ref{th_group_200}.

\begin{lemma}\label{lem_existence}
A mapping $\varphi:\zn^2\to\zn^2$ given by $\varphi:(x,y)\to(a_1x+b_1y,a_2x+b_2y)$ is a group automorphism if and only if the determinant
$D=$\scalebox{.8}{$\deter{a_1}{b_1}{a_2}{b_2}$} is coprime with $n$. 
\end{lemma}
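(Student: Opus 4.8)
The plan is to identify $\varphi$ with left multiplication by the matrix $M=\mmatrix{a_1}{b_1}{a_2}{b_2}$ on column vectors over the ring $\zn$, and to show that this map is a bijection of $\zn^2$ exactly when its determinant $D$ is a unit of $\zn$, i.e.\ when $\gcd(D,n)=1$. First I would note that $\varphi$ is always a group homomorphism, since the fact that $\varphi$ respects the addition of $\zn^2$ is just the distributivity of multiplication over addition modulo $n$. As $\zn^2$ is finite, $\varphi$ is an automorphism if and only if it is a bijection, so it remains to characterise bijectivity of $\varphi$.

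For the implication ``$\gcd(D,n)=1\Rightarrow\varphi$ is an automorphism'', let $D^{-1}$ be the inverse of $D$ modulo $n$ and let $\psi$ be the linear map of $\zn^2$ given by the matrix $M'=D^{-1}\mmatrix{b_2}{-b_1}{-a_2}{a_1}$ over $\zn$, i.e.\ $D^{-1}$ times the adjugate of $M$. From the elementary identity $M\cdot\mmatrix{b_2}{-b_1}{-a_2}{a_1}=\mmatrix{b_2}{-b_1}{-a_2}{a_1}\cdot M=\mmatrix{D}{0}{0}{D}$ one obtains $MM'=M'M=\mmatrix{1}{0}{0}{1}$ over $\zn$, hence $\psi\circ\varphi=\varphi\circ\psi=\mathrm{id}_{\zn^2}$; thus $\varphi$ is bijective, and therefore an automorphism.

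For the converse, suppose $\varphi$ is an automorphism, in particular surjective. I would choose $u,v\in\zn^2$ with $\varphi(u)=(1,0)$ and $\varphi(v)=(0,1)$ and let $N$ be the $2\times2$ matrix over $\zn$ whose columns are $u$ and $v$; then $MN$ equals the $2\times2$ identity matrix over $\zn$. Taking determinants and using their multiplicativity over the commutative ring $\zn$ yields $D\cdot\det N\equiv1\pmod n$, so $D$ is invertible modulo $n$, that is $\gcd(D,n)=1$.

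I do not expect a serious obstacle; the only delicate point is the converse, where one must resist the temptation to argue ``$D\ne0\Rightarrow M$ invertible'', which fails over $\zn$ when $n$ is composite; the surjectivity-and-determinant argument above circumvents this. An alternative that settles both implications simultaneously is to bring $M$ to Smith normal form $M=PSQ$ with $P,Q$ unimodular and $S=\mathrm{diag}(d_1,d_2)$, $d_1d_2=|D|$; then $\varphi(\zn^2)$ has the same cardinality as $S\zn^2=(d_1\zn)\times(d_2\zn)$, which has $n^2$ elements precisely when $\gcd(d_1,n)=\gcd(d_2,n)=1$, equivalently $\gcd(D,n)=1$.
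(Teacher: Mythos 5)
Your proof is correct and takes essentially the same route as the paper's: the paper simply asserts that $\varphi$ is an automorphism of $\zn^2$ if and only if the associated $2\times 2$ matrix is invertible over the ring $\Z/n\Z$, which is exactly the identification you make and then justify via the determinant criterion. You merely supply the standard details (the adjugate for sufficiency, multiplicativity of determinants for necessity) that the paper leaves implicit, so there is nothing to correct.
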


\begin{proof}
It immediately follows from the fact that the mapping $\varphi:(x,y)\to(a_1x+b_1y,a_2x+b_2y)$ is an automorphism of $\zn^2$ if and only if the matrix
\scalebox{.8}{$\mmatrix{a_1}{b_1}{a_2}{b_2}$} is invertible over the ring of integers $\Z/n\Z$.
\end{proof}

\noindent
So we have the following corollary:

\begin{corollary}\label{cor_existence}
System 
\scalebox{.8}{
$\begin{array}{ccccc}
a_{1}x & + & b_{1}y  &  = & c_1 \\
a_{2}x & + & b_{2}y  &  = & c_2 
\end{array}$ }
of linear equations over $\zn$ has a unique solution in $\zn$ if and only if the determinant $D=$\scalebox{.8}{$\deter{a_1}{b_1}{a_2}{b_2}$} is coprime with $n$.
\end{corollary}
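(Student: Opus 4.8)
The plan is to read the corollary as the assertion that, for any prescribed right-hand side $(c_1,c_2)\in\zn^2$, the system
$a_1x+b_1y=c_1,\ a_2x+b_2y=c_2$ has exactly one solution $(x,y)\in\zn^2$ if and only if $\gcd(D,n)=1$. The key observation is that the map $\varphi:\zn^2\to\zn^2$ given by $\varphi:(x,y)\mapsto(a_1x+b_1y,\,a_2x+b_2y)$ is \emph{always} an endomorphism of the additive group $\zn^2$, so that solving the system is exactly the problem of finding the $\varphi$-preimage of $(c_1,c_2)$, and by Lemma~\ref{lem_existence} this $\varphi$ is an automorphism precisely when $D$ is coprime with $n$.

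For the ``if'' direction I would argue as follows: if $\gcd(D,n)=1$ then $\varphi$ is an automorphism, hence a bijection of $\zn^2$, so the element $(c_1,c_2)$ has a unique preimage $(x,y)$ under $\varphi$, which is the asserted unique solution. (If one wants an explicit formula, one can invert the coefficient matrix over $\Z/n\Z$ via its adjugate and the multiplicative inverse of $D$, recovering Cramer's rule, but this is not needed for the statement.)

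For the ``only if'' direction, suppose $\gcd(D,n)=g>1$. Then, by Lemma~\ref{lem_existence}, $\varphi$ is not an automorphism; since $\zn^2$ is finite and $\varphi$ is a self-map, failing to be bijective forces $\varphi$ to be non-injective, so $\ker\varphi\neq\{(0,0)\}$. Consequently, if the system had any solution $(x_0,y_0)$, then $(x_0,y_0)+\kappa$ would be a solution as well for every $\kappa\in\ker\varphi$, yielding at least two distinct solutions; and if the system had no solution at all, it again does not have a unique one. In either case the system fails to have a unique solution, which is the contrapositive of what is required.

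The only point that needs a little care — and the closest thing here to an obstacle — is the step ``not an automorphism $\Rightarrow$ not injective'' used in the converse: it relies on the finiteness of $\zn^2$ (an injective self-map of a finite set is automatically surjective) combined with the fact, supplied by Lemma~\ref{lem_existence}, that non-coprimality of $D$ is exactly the failure of $\varphi$ to be an automorphism. With these two ingredients the corollary follows at once.
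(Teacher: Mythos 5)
Your proof is correct and follows the same route as the paper, which simply presents the corollary as an immediate consequence of Lemma~\ref{lem_existence}: solving the system is finding the $\varphi$-preimage of $(c_1,c_2)$, and $\varphi$ is bijective exactly when $\gcd(D,n)=1$. Your extra care on the converse (non-automorphism of a self-map of a finite group forces a nontrivial kernel, hence zero or several solutions) is a detail the paper leaves implicit, but it is the intended argument.
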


\begin{corollary}\label{cor_solution}
Let $\alpha\in\zn^\times$ has order $k$, let $g_1(x)=(a_1x,b_1x;s_1)$, let $g_2(x)=(a_2x,b_2x;s_2)$ and let $s=s_1\cdot s_2$, $a_1,a_2,b_1,b_2,x\in\zn$, $s_1,s_2\in\Delta_k$. 
Let $g_1(x)\cdot g_2(y)=(ax+by,cx+dy;s)$, for some $a,b,c,d\in\zn$. 
If the determinant $D=$\scalebox{.8}{$\deter{a}{b}{c}{d}$} is coprime with $n$ then for each $(u,v)\in\zn^2$ there is exactly one element $(x,y)\in\zn^2$ such that 
$g_1(x)\cdot g_2(y)=(u,v;s)$.
\end{corollary}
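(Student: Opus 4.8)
The plan is to reduce the statement to a $2\times 2$ linear system over $\zn$ and then invoke Corollary~\ref{cor_existence}. First I would make the structural claim precise. Write $s_1=(p_0,p_1,i)$ and $s_2=(q_0,q_1,i')$ with $p_0,p_1,q_0,q_1\in\zk$ and $i,i'\in\Z_2$, so that $g_1(x)=(a_1x,b_1x;p_0,p_1,i)$ and $g_2(y)=(a_2y,b_2y;q_0,q_1,i')$, and substitute these into the product rule of $\Gamma(n,\alpha)$. The last three coordinates of $g_1(x)\cdot g_2(y)$ then come out to be exactly the three coordinates of the $\Delta_k$-product $s_1\cdot s_2=s$, and crucially they do not involve $x$ or $y$ at all. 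The first two coordinates come out as $a_1x+\alpha^{p_0}a_2y$ and $b_1x+\alpha^{p_1}b_2y$ when $i=0$, and as $a_1x+\alpha^{p_0}b_2y$ and $b_1x+\alpha^{p_1}a_2y$ when $i=1$; in either case these are $\zn$-linear in $x$ and $y$, which is precisely the asserted form $(ax+by,cx+dy;s)$, with $a=a_1$, $c=b_1$, and $b,d$ the appropriate $\alpha^{p_0}$-, $\alpha^{p_1}$-scaled copies of $a_2,b_2$.

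Next I would observe that, since the last three coordinates of $g_1(x)\cdot g_2(y)$ are identically equal to $s$ regardless of $x$ and $y$, the equation $g_1(x)\cdot g_2(y)=(u,v;s)$ is equivalent to the system $ax+by=u$, $cx+dy=v$ over $\zn$. The hypothesis that $D=\deter{a}{b}{c}{d}$ is coprime with $n$ is exactly the condition in Corollary~\ref{cor_existence}, which guarantees that this system has a unique solution $(x,y)\in\zn^2$. That pair $(x,y)$ is then the unique element of $\zn^2$ satisfying $g_1(x)\cdot g_2(y)=(u,v;s)$, which is the assertion.

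There is no genuinely hard step here; the only point requiring care is the bookkeeping in the first paragraph — carrying out the product-rule substitution and the parity case split on $i$, to confirm both that the first two coordinates are $\zn$-linear in $x,y$ and that the remaining three coordinates are independent of $x,y$ and equal to $s$. Once that verification is in place, the conclusion is an immediate application of Corollary~\ref{cor_existence}.
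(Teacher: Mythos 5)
Your proof is correct and follows the same route as the paper: the paper's own proof is simply the one-line statement that the corollary follows from Corollary~\ref{cor_existence}, and your argument fills in exactly the bookkeeping (the product-rule substitution and the observation that the last three coordinates are constant equal to $s$) that the paper leaves implicit before invoking that corollary.
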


\begin{proof}
It follows from Corollary \ref{cor_existence}.
\end{proof}
\section{Results}\label{sec_results}

For the rest of this paper, $P$ will denote the set of all primes of the form $p\equiv 1\pmod{10}$.

\begin{theorem}\label{th_group_200}
Let $n\in P\cup\{1\}$ and let $d = 17n-1$. Then there exists a Cayley graph of diameter two, degree $d$ and of order $\frac{200}{289}(d+1)^2$.
\end{theorem}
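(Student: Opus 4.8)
The plan is to realize the desired Cayley graph on the group $\Gamma(n,\alpha)$ constructed in the Preliminaries, where $\alpha\in\zn^\times$ has order $k=10$ (for $n=1$ one takes the trivial degenerate case separately, or notes $\Gamma(1,\alpha)$ has order $200$ and checks $d=16$ directly). Since $|\Gamma(n,\alpha)|=2n^2k^2=2\cdot 100\cdot n^2=200n^2=\tfrac{200}{289}(17n)^2=\tfrac{200}{289}(d+1)^2$, the order is automatically correct; the whole content is to exhibit a unit-free, inverse-closed generating set $X$ of size $d=17n-1$ such that $Cay(\Gamma(n,\alpha),X)$ has diameter two. First I would partition the nontrivial elements of $\zn^2$ according to which of $x_0,x_1$ are zero, and build $X$ as a union of ``parametrized families'' of the shape $\{(a_1x,b_1x;s_1)\mid x\in\zn\}$ (and a few analogous families in $y$), exactly the form appearing in Corollary~\ref{cor_solution}; each such family contributes roughly $n$ or $n-1$ generators, and the count should be arranged so the families (paired with their inverses, using Lemma~\ref{lem_alpha_p}(iv) that $\alpha^5\equiv-1$ to see that many families are inverse-closed or pair up cleanly) total $17n-1$. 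The macros $\ax,\bx,\cx,\dots$ defined at the top of the file evidently name precisely these $17$ or so families together with their inverse families, so the construction is: let $X$ be the union of the element sets of the families $\{(x,x;5,0,0)\}$, $\{(x,-x;0,0,1)\}$, $\{(x,x;1,0,1)\}$, and the remaining ones through $\{(x,\alpha x;4,1,0)\}$, each ranging over $x\in\zn\setminus\{0\}$ (or $x\in\zn$, adjusting by one), and verify $1_\Gamma\notin X$ and $X=X^{-1}$.

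The core of the proof is the diameter-two verification: for every $g=(u_0,u_1;v_0,v_1;i)\in\Gamma(n,\alpha)$ I must show $g\in X\cup X^2\cup\{1\}$. I would organize this by the value of the $\Delta_{10}$-part $(v_0,v_1,i)$: since $\Delta_{10}$ has $200$ elements and $|X|\approx 17n$ while each generator sits in one of the $200$ ``slabs'' $\zn^2\times\{(v_0,v_1,i)\}$, a product $x_px_q$ of two generators lands in the slab indexed by the product of their $\Delta_{10}$-parts, and I need the $\approx 17\cdot 17=289$ available slab-products to cover, with multiplicity, all $200$ target slabs --- and within each target slab to cover all $n^2$ elements of $\zn^2$. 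The key mechanism is Corollary~\ref{cor_solution}: once I fix a pair of families $g_1(x)=(a_1x,b_1x;s_1)$, $g_2(y)=(a_2x,b_2x;s_2)$ whose $\Delta_{10}$-parts multiply to the desired $(v_0,v_1,i)$, the product $g_1(x)g_2(y)$ has first two coordinates of the form $(ax+by,cx+dy)$ with $a,b,c,d$ explicit $\pm\alpha^j$-combinations, and if the determinant $ad-bc$ is coprime to $n$ then that single pair of families already hits \emph{every} $(u_0,u_1)$ in its slab. So for each of the $200$ slabs I need to name (at least) one family-pair from $X\times X$ whose $\Delta_{10}$-product is that slab and whose determinant is a unit mod $n$ --- and here Lemma~\ref{lem_Lambda} is exactly the tool: every determinant that arises will be an element of the set $\Lambda$ (a $\pm\alpha^i\pm\alpha^j$ or $\pm2\alpha^i$ or $\pm\alpha^i$), hence coprime with $p=n$ by that lemma. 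A handful of slabs (those reachable only by ``degenerate'' family-pairs where the determinant vanishes, e.g. products landing in a coordinate hyperplane) will need the element to be caught by $X$ itself or by a second, non-degenerate family-pair; these exceptional slabs must be enumerated and dispatched by hand.

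The bookkeeping I would actually carry out is a table with $200$ rows, one per element $(v_0,v_1,i)\in\Delta_{10}$, each row listing a chosen ordered pair of the $17$-odd generator-families whose $\Delta_{10}$-parts multiply to that row's label, together with the resulting determinant; one then checks (i) the $\Delta_{10}$-product is correct by the multiplication rule $(y_0,y_1,i)(y_0',y_1',i')=(y_0+y_i',\,y_1+y_{1-i}',\,i+i')$ computed mod $10$ and mod $2$, and (ii) the determinant lies in $\Lambda$, invoking Lemma~\ref{lem_Lambda}. This is precisely why the generating families were chosen with $\Delta_{10}$-parts like $(5,0,0),(0,0,1),(1,0,1),(5,0,1),(1,3,1),(1,7,1),(5,2,1),(3,2,0),(4,1,0)$ and their inverses $(7,9,1),(3,9,1),(8,5,1),(7,8,0),(6,9,0)$ and so on --- these $17$ residue-triples and their products are engineered to sweep out all of $\Delta_{10}$.

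\textbf{Main obstacle.} The hard part will not be any individual determinant computation --- those are immediate from Lemma~\ref{lem_Lambda} --- but rather designing the $17$ generating families so that simultaneously (a) their total size is exactly $17n-1$ after accounting for $x=0$ terms and inverse-pairing, (b) $X$ is genuinely inverse-closed (this forces the inverse families $(-x_i,-x_{1-i},s)$ to again be among the listed families, which is where $\alpha^5=-1$ is used to absorb sign changes into exponent shifts by $5$), and (c) the $200$-row covering table has \emph{no} gap and every exceptional (determinant-zero) slab is independently covered. Verifying (c) exhaustively --- i.e. that the $\approx 289$ family-pairs really do surject onto all $200$ slabs with a unit determinant in all but a controlled list of cases, and patching that list --- is the combinatorial heart of the argument and the step most likely to need care.
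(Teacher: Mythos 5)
Your plan coincides in architecture with the paper's actual proof: take $\Gamma=\Gamma(n,\alpha)=\zn^2\rtimes_\alpha\Delta_{10}$ of order $200n^2=\frac{200}{289}(d+1)^2$, build $X$ as a union of parametrized linear families $(a_1x,b_1x;s_1)$, $x\in\zn$, together with their inverse families (the count $17n-1$ arises from nine base families plus nine inverse families, one of which is globally self-inverse and one of which meets its inverse family in exactly one element, namely at $x=0$), and verify diameter two slab-by-slab over $\Delta_{10}$ by exhibiting, for each slab, one ordered pair of families whose product has an invertible $2\times2$ coefficient matrix --- exactly the mechanism of Corollary~\ref{cor_solution}, with Lemma~\ref{lem_Lambda} certifying that each determinant, being of the form $\pm\alpha^i$, $\pm2\alpha^i$ or $\pm\alpha^i\pm\alpha^j$, is a unit mod $n$. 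You also correctly locate the role of $\alpha^5\equiv-1$ in making the families pair up under inversion, and the separate treatment of $n=1$.

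The gap is that you have not actually produced the construction, and for this theorem the construction \emph{is} the proof: the specific nine families and the exhaustive covering table are the entire mathematical content, and your proposal leaves both ``to be designed,'' conceding that the covering might fail and might need patching. Two concrete corrections to your outline. First, you do not need a $200$-row table: since $X=X^{-1}$, an element $(x,y;s)$ is a product of two generators if and only if its inverse is, so it suffices to cover a set $S\subset\Delta_{10}$ with $S\cup S^{-1}=\Delta_{10}$; the paper's table accordingly has $107$ rows, not $200$. Second, there are no ``exceptional determinant-zero slabs'' to be ``caught by $X$ itself or by a second family-pair'': the families are engineered so that every row of the table already carries a determinant lying in $\Lambda$, and a correct writeup must demonstrate this uniformly rather than postulate a fallback for degenerate cases. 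Until the families are fixed, the table is written down, and each of the $107$ products and determinants is checked, what you have is a faithful description of how the proof must go, not a proof.
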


\begin{proof}
Let $n=p>1$ and $\alpha$ be as in Lemma \ref{lem_alpha_p}. Let $\Gamma=\Gamma(n,\alpha)=\zn^2\rtimes_\alpha\Delta_{10}$ and for $x\in\zn$ we set 
$a(x)=\ax,\ b(x)=\bx,\ c(x)=\cx,\ d(x)=\dx,\ e(x)=\ex, f(x)=\fx,\\ g(x)=\gx, h(x)=\hx$ and $k(x)=\kx$. 
We can see that 
$a^{-1}(x)=\axm$, $b^{-1}(x)=\bx$, $c^{-1}(x)=\cxm$, $d^{-1}(x)=\dxm$, $e^{-1}(x)=\exm, f^{-1}(x)=\fxm,\\ g^{-1}(x)=\gxm, h^{-1}(x)=\hxm$ and $k^{-1}(x)=\kxm$.

Let $X=\{a(x),a^{-1}(x),b(x),b^{-1}(x),c(x),c^{-1}(x),d(x),d^{-1}(x),e(x),e^{-1}(x),f(x),\\f^{-1}(x),g(x),g^{-1}(x),h(x),h^{-1}(x),k(x),k^{-1}(x)\vert x\in\zn\}$ 
and let $G=Cay(\Gamma,X)$ be the Cayley graph for the 
underlying group $\Gamma$ and the generating set $X$. Since $b(x)=b^{-1}(x)$ for each $x\in\zn$, and $a(x)=a^{-1}(x)$ exactly when $x=0$, the set $X$ has $17n-1$ elements. 
As the group 
$\Gamma$ has order $\vert\Gamma\vert=2n^2\cdot 10^2=200n^2$ and the degree of $G$ is $d=17n-1$, the corresponding Cayley graph has order $\frac{200}{289}(d+1)^2$. 
Now let a set $S\subset\Delta_{10}$ consists of elements $(i,j,0)$ such that 
$i=0,0\leq j\leq 5$; $1\leq i\leq 5,0\leq j\leq 10-i$; $6\leq i\leq 8,1\leq j\leq 9-i$ and of the elements $(i,j,1)$ such that
$i=0,j=0$ and $1\leq i\leq 9,0\leq j\leq 10-i$. We can see that $S\cup S^{-1}=\Delta_{10}$.

Since the generating set $X$ is inverse-closed, to prove that $G$ has diameter two, it is sufficient to show that every element of $\Gamma$ of the form $(x,y;s), x,y\in\zn, s\in S$
can be generated as a product of two elements from $X$. For every $s\in S$ we show that there is $g_1(x),g_2(x)\in X$ (see Corollary \ref{cor_solution}) such that the determinant 
corresponding to the product $g_1(x)\cdot g_2(y)$ is coprime with $n$. 
\def\scal{.8}

For the product of $a(x)$ and $a(y)$ we have $a(x)\cdot a(y)=\ax\cdot\ay=(x+\alpha^5 y,x+y;0,0,0)=(x-y,x+y;0,0,0)$. We have to show that for any $(u,v)\in\zn^2$ there is $x,y\in\zn$ such that $(x-y,x+y)=(u,v)$. Since the determinant \scalebox{\scal}{$D=\deter{1}{-1}{1}{1}=2$}, it is coprime with $n$, and therefore the system $x-y=u$ and $x+y=v$ has a (unique) solution for each $(u,v)\in\zn^2$. In the table below we can see how to generate the other elements of $\Gamma$.

\def\skry#1{\iffalse #1\fi} \def\novyr{}

\begingroup
\centering
\renewcommand{\arraystretch}{1.5}
\setcounter{group}{1}
\begin{longtable}{|r|r@{$\cdot$}l@{ =}r|*2{>{\renewcommand{\arraystretch}{.35}}l|}}\cline{2-5}
\multicolumn{1}{c|}{} & \multicolumn{3}{ c| }{\textbf{Product of generators}} & \multicolumn{1}{ c| }{\textbf{Determinant}}  \\\hline
\Group & $a(x)$ & $a(x)$ & $\skry{\dxm\cdot\gy=}(x-y,x+y;0,0,0)$ & 
 \scalebox{\scal}{$\deter{1}{-1}{1}{1}=2$}\\\hline
\Group & $c(x)$ & $b(y)$ & $\skry{\cxm\cdot\ey=}(x-\alpha y,x+y;1,0,0)$ & 
 \scalebox{\scal}{$\deter{1}{-\alpha}{1}{1}=\alpha+1$}\\\hline
\Group & $d^{-1}(x)$ & $g(y)$ & $\skry{\dxm\cdot\gy=}(y,x;2,0,0)$ & 
 \scalebox{\scal}{$\deter{0}{1}{1}{0}=-1$}\\\hline
\Group & $c^{-1}(x)$ & $e(y)$ & $\skry{\cxm\cdot\ey=}(-x+\alpha^3 y,\alpha^4 x-\alpha^4 y;3,0,0)$ & 
 \scalebox{\scal}{$\deter{-1}{\alpha^3}{\alpha^4}{-\alpha^4}=\alpha^4+\alpha^2$}\\\hline
\Group & $d(x)$ & $c^{-1}(y)$ & $\skry{\dx\cdot\cym=}(x-\alpha^4 y,-y;4,0,0)$ &  
 \scalebox{\scal}{$\deter{1}{-\alpha^4}{0}{-1}=-1$}\\\hline
\Group & $d(x)$ & $b(y)$ & $\skry{\dx\cdot\by=}(x+y,y;5,0,0)$ & 
 \scalebox{\scal}{$\deter{1}{1}{0}{1}=1$}\\\hline
 \Group & $b(x)$ & $c(y)$ & $\skry{\bx\cdot\cy=}(x+y,-x+y;0,1,0)$ & 
 \scalebox{\scal}{$\deter{1}{1}{-1}{1}=2$}\\\hline
\Group & $g(x)$ & $d^{-1}(y)$ & $\skry{\gx\cdot\dym=}(-y,x;0,2,0)$ & 
 \scalebox{\scal}{$\deter{0}{-1}{1}{0}=1$}\\\hline
\Group & $c(x)$ & $f^{-1}(y)$ & $\skry{\cx\cdot\fym=}(x-\alpha y,x-\alpha^3 y;0,3,0)$ & 
 \scalebox{\scal}{$\deter{1}{-\alpha}{1}{-\alpha^3}=-\alpha^3+\alpha$}\\\hline
\Group & $f(x)$ & $e^{-1}(y)$ & $\skry{\fx\cdot\eym=}(x-y,x+\alpha^2 y;0,4,0)$ & 
 \scalebox{\scal}{$\deter{1}{-1}{1}{\alpha^2}=\alpha^2+1$}\\\hline
\Group & $b(x)$ & $d(y)$ & $\skry{\bx\cdot\dy=}(x,-x+y;0,5,0)$ & 
 \scalebox{\scal}{$\deter{1}{0}{-1}{1}=1$}\\\hline
\Group & $c(x)$ & $c(y)$ & $\skry{\cx\cdot\cy=}(x+\alpha y,x+y;1,1,0)$ & 
 \scalebox{\scal}{$\deter{1}{\alpha}{1}{1}=1-\alpha$}\\\hline
\Group & $f(x)$ & $d(y)$ & $\skry{\fx\cdot\dy=}(x,x-\alpha^2 y;1,2,0)$ & 
 \scalebox{\scal}{$\deter{1}{0}{1}{-\alpha^2}=-\alpha^2$}\\\hline
\Group & $e(x)$ & $b(y)$ & $\skry{\ex\cdot\by=}(x-\alpha y,\alpha^3 x+ \alpha^3 y;1,3,0)$ & 
 \scalebox{\scal}{$\deter{1}{-\alpha}{\alpha^3}{\alpha^3}=\alpha^4+\alpha^3$}\\\hline
\Group & $e(x)$ & $c(y)$ & $\skry{\ex\cdot\cy=}(x+\alpha y,\alpha^3 x+ \alpha^3 y;1,4,0)$ & 
 \scalebox{\scal}{$\deter{1}{\alpha}{\alpha^3}{\alpha^3}=\alpha^3-\alpha^4$}\\\hline
\Group & $c(x)$ & $d(y)$ & $\skry{\cx\cdot\dy=}(x,x+y;1,5,0)$ & 
 \scalebox{\scal}{$\deter{1}{0}{1}{1}=1$}\\\hline
\Group & $g^{-1}(x)$ & $e(y)$ & $\skry{\gxm\cdot\ey=}(\alpha^3 x+\alpha y,-y;1,6,0)$ & 
 \scalebox{\scal}{$\deter{\alpha^3}{\alpha}{0}{-1}=-\alpha^3$}\\\hline
\Group & $f(x)$ & $b(y)$ & $\skry{\fx\cdot\by=}(x-\alpha y,x-\alpha^2 y;1,7,0)$ & 
 \scalebox{\scal}{$\deter{1}{-\alpha}{1}{-\alpha^2}=\alpha-\alpha^2$}\\\hline
\Group & $e(x)$ & $d(y)$ & $\skry{\ex\cdot\dy=}(x,\alpha^3 x+\alpha^3 y;1,8,0)$ & 
 \scalebox{\scal}{$\deter{1}{0}{\alpha^3}{\alpha^3}=\alpha^3$}\\\hline
\Group & $a(x)$ & $k^{-1}(y)$ & $\skry{\ax\cdot\kym=}(x-\alpha y,x+\alpha^4 y;1,9,0)$ & 
 \scalebox{\scal}{$\deter{1}{-\alpha}{1}{\alpha^4}=\alpha^4+\alpha$}\\\hline
\Group & $d(x)$ & $f(y)$ & $\skry{\dx\cdot\fy=}(x-y,y;2,1,0)$ & 
 \scalebox{\scal}{$\deter{1}{-1}{0}{1}=1$}\\\hline
\Group & $f^{-1}(x)$ & $f^{-1}(y)$ & $\skry{\fxm\cdot\fym=}(-\alpha^3 x-\alpha^2 y,\alpha^4 x-\alpha^2 y;2,2,0)$ & 
 \scalebox{\scal}{$\deter{-\alpha^3}{-\alpha^2}{\alpha^4}{-\alpha^2}=-\alpha-1$}\\\hline
\Group & $g(x)$ & $f(y)$ & $\skry{\gx\cdot\fy=}(-y,x+\alpha^2 y;2,3,0)$ & 
 \scalebox{\scal}{$\deter{0}{-1}{1}{\alpha^2}=1$}\\\hline
\Group & $c^{-1}(x)$ & $g(y)$ & $\skry{\cxm\cdot\gy=}(-x+y,\alpha^4 x;2,4,0)$ & 
 \scalebox{\scal}{$\deter{-1}{1}{\alpha^4}{0}=-\alpha^4$}\\\hline
\Group & $b(x)$ & $g(y)$ & $\skry{\bx\cdot\gy=}(x+y,-x;2,5,0)$ & 
 \scalebox{\scal}{$\deter{1}{1}{-1}{0}=1$}\\\hline
\Group & $f^{-1}(x)$ & $e^{-1}(y)$ & $\skry{\fxm\cdot\eym=}(-\alpha^3 x-\alpha^2 y,\alpha^4 x+\alpha^4 y;2,6,0)$ & 
 \scalebox{\scal}{$\deter{-\alpha^3}{-\alpha^2}{\alpha^4}{\alpha^4}=\alpha^2-\alpha$}\\\hline
\Group & $e^{-1}(x)$ & $g^{-1}(y)$ & $\skry{\exm\cdot\gym=}(-x,\alpha^4 x+\alpha^2 y;2,7,0)$ & 
 \scalebox{\scal}{$\deter{-1}{0}{\alpha^4}{\alpha^2}=-\alpha^2$}\\\hline
\Group & $a(x)$ & $h^{-1}(y)$ & $\skry{\ax\cdot\hym=}(x-\alpha^2 y,x+\alpha^4 y;2,8,0)$ & 
 \scalebox{\scal}{$\deter{1}{-\alpha^2}{1}{\alpha^4}=\alpha^4+\alpha^2$}\\\hline
\Group & $b(x)$ & $e(y)$ & $\skry{\bx\cdot\ey=}(x+\alpha^3 y,-x+y,;3,1,0)$ & 
 \scalebox{\scal}{$\deter{1}{\alpha^3}{-1}{1}=\alpha^3+1$}\\\hline
\Group & $f(x)$ & $g(y)$ & $\skry{\fx\cdot\gy=}(x+\alpha y,x;3,2,0)$ & 
 \scalebox{\scal}{$\deter{1}{\alpha}{1}{0}=-\alpha$}\\\hline
\Group & $g^{-1}(x)$ & $g^{-1}(y)$ & $\skry{\gxm\cdot\gym=}(\alpha^3 x,-\alpha^3 y;3,3,0)$ & 
 \scalebox{\scal}{$\deter{\alpha^3}{0}{0}{-\alpha^3}=\alpha$}\\\hline
\Group & $f^{-1}(x)$ & $d(y)$ & $\skry{\fxm\cdot\dy=}(-\alpha^3 x,\alpha^4 x-\alpha^4 y;3,4,0)$ & 
 \scalebox{\scal}{$\deter{-\alpha^3}{0}{\alpha^4}{-\alpha^4}=-\alpha^2$}\\\hline
\Group & $c(x)$ & $g(y)$ & $\skry{\cx\cdot\gy=}(x+\alpha y,x,;3,5,0)$ & 
 \scalebox{\scal}{$\deter{1}{\alpha}{1}{0}=-\alpha$}\\\hline
\Group & $d^{-1}(x)$ & $e(y)$ & $\skry{\dxm\cdot\ey=}(\alpha^3 y,x-y;3,6,0)$ & 
 \scalebox{\scal}{$\deter{0}{\alpha^3}{1}{-1}=-\alpha^3$}\\\hline
\Group & $h^{-1}(x)$ & $k^{-1}(y)$ & $\skry{\hxm\cdot\kym=}(\alpha^2 x-\alpha^3 y,\alpha^4 x+\alpha^2 y;3,7,0)$ & 
 \scalebox{\scal}{$\deter{\alpha^2}{-\alpha^3}{\alpha^4}{\alpha^2}=\alpha^4-\alpha^2$}\\\hline
\Group & $c(x)$ & $e(y)$ & $\skry{\cx\cdot\ey=}(x+\alpha^4 y,x+y;4,1,0)$ & 
 \scalebox{\scal}{$\deter{1}{\alpha^4}{1}{1}=1-\alpha^4$}\\\hline
\Group & $g(x)$ & $c^{-1}(y)$ & $\skry{\gx\cdot\cym=}(-\alpha^4 y,x-\alpha^2 y;4,2,0)$ & 
 \scalebox{\scal}{$\deter{0}{-\alpha^4}{1}{-\alpha^2}=\alpha^4$}\\\hline
\Group & $d(x)$ & $f^{-1}(y)$ & $\skry{\dx\cdot\fym=}(x-\alpha^4 y,-\alpha^3 y;4,3,0)$ & 
 \scalebox{\scal}{$\deter{1}{-\alpha^4}{0}{-\alpha^3}=-\alpha^3$}\\\hline
\Group & $e(x)$ & $e(y)$ & $\skry{\ex\cdot\ey=}(x+\alpha^4 y,\alpha^3 x+\alpha^3 y;4,4,0)$ & 
 \scalebox{\scal}{$\deter{1}{\alpha^4}{\alpha^3}{\alpha^3}=\alpha^3+\alpha^2$}\\\hline
\Group & $g(x)$ & $f^{-1}(y)$ & $\skry{\gx\cdot\fym=}(-\alpha^4 y,x+y;4,5,0)$ & 
 \scalebox{\scal}{$\deter{0}{-\alpha^4}{1}{1}=\alpha^4$}\\\hline
\Group & $h^{-1}(x)$ & $h^{-1}(y)$ & $\skry{\hxm\cdot\hym=}(\alpha^2 x-\alpha^4 y,\alpha^4 x+\alpha^2 y;4,6,0)$ & 
 \scalebox{\scal}{$\deter{\alpha^2}{-\alpha^4}{\alpha^4}{\alpha^2}=\alpha^4-\alpha^3$}\\\hline
\Group & $d(x)$ & $c(y)$ & $\skry{\dx\cdot\cy=}(x+y,-y;5,1,0)$ & 
 \scalebox{\scal}{$\deter{1}{1}{0}{-1}=-1$}\\\hline
\Group & $g(x)$ & $b(y)$ & $\skry{\gx\cdot\by=}(y,x+\alpha^2 y;5,2,0)$ & 
 \scalebox{\scal}{$\deter{0}{1}{1}{\alpha^2}=-1$}\\\hline
\Group & $g(x)$ & $c(y)$ & $\skry{\gx\cdot\cy=}(-y,x+\alpha^2 y;5,3,0)$ & 
 \scalebox{\scal}{$\deter{0}{-1}{1}{\alpha^2}=1$}\\\hline
\Group & $f^{-1}(x)$ & $g(y)$ & $\skry{\fxm\cdot\gy=}(-\alpha^3 x+\alpha^3 y,\alpha^4 x;5,4,0)$ & 
 \scalebox{\scal}{$\deter{-\alpha^3}{\alpha^3}{\alpha^4}{0}=\alpha^2$}\\\hline
\Group & $d(x)$ & $d(y)$ & $\skry{\dx\cdot\dy=}(x,-y;5,5,0)$ & 
 \scalebox{\scal}{$\deter{1}{0}{0}{-1}=-1$}\\\hline
\Group & $e(x)$ & $g^{-1}(y)$ & $\skry{\ex\cdot\gym=}(x,\alpha^3 x-\alpha y;6,1,0)$ & 
 \scalebox{\scal}{$\deter{1}{0}{\alpha^3}{-\alpha}=-\alpha$}\\\hline
\Group & $e^{-1}(x)$ & $f^{-1}(y)$ & $\skry{\exm\cdot\fym=}(-x+\alpha y,\alpha^4 x-\alpha^2 y;6,2,0)$ & 
 \scalebox{\scal}{$\deter{-1}{\alpha}{\alpha^4}{-\alpha^2}=\alpha^2+1$}\\\hline
\Group & $e(x)$ & $d^{-1}(y)$ & $\skry{\ex\cdot\dym=}(x+\alpha y,\alpha^3 x;6,3,0)$ & 
 \scalebox{\scal}{$\deter{1}{\alpha}{\alpha^3}{0}=-\alpha^4$}\\\hline
\Group & $b(x)$ & $f(y)$ & $\skry{\bx\cdot\fy=}(x+y,-x+y;7,1,0)$ & 
 \scalebox{\scal}{$\deter{1}{1}{-1}{1}=2$}\\\hline
\Group & $g^{-1}(x)$ & $e^{-1}(y)$ & $\skry{\gxm\cdot\eym=}(\alpha^3 x+\alpha^2 y,y;7,2,0)$ & 
 \scalebox{\scal}{$\deter{\alpha^3}{\alpha^2}{0}{1}=\alpha^3$}\\\hline
\Group & $c(x)$ & $f(y)$ & $\skry{\cx\cdot\fy=}(x+\alpha y,x+y;8,1,0)$ & 
 \scalebox{\scal}{$\deter{1}{\alpha}{1}{1}=1-\alpha$}\\\hline
\Group & $a(x)$ & $d(y)$ & $\skry{\ax\cdot\dy=}(x-y,x;0,0,1)$ & 
 \scalebox{\scal}{$\deter{1}{-1}{1}{0}=1$}\\\hline
\Group & $k(x)$ & $e^{-1}(y)$ & $\skry{\kx\cdot\eym=}(x-\alpha^4 y,\alpha x-y;1,0,1)$ & 
 \scalebox{\scal}{$\deter{1}{-\alpha^4}{\alpha}{-1}=-2$}\\\hline
\Group & $h^{-1}(x)$ & $g(y)$ & $\skry{\hxm\cdot\gy=}(\alpha^2 x,\alpha^4 x-\alpha^3 y;2,0,1)$ & 
 \scalebox{\scal}{$\deter{\alpha^2}{0}{\alpha^4}{-\alpha^3}=1$}\\\hline
\Group & $f(x)$ & $h(y)$ & $\skry{\fx\cdot\hy=}(x+\alpha^2 y,x-\alpha^2 y;3,0,1)$ & 
 \scalebox{\scal}{$\deter{1}{\alpha^2}{1}{-\alpha^2}=-2\alpha^2$}\\\hline
\Group & $k(x)$ & $c^{-1}(y)$ & $\skry{\kx\cdot\cym=}(x-\alpha^3 y,\alpha x-\alpha y;4,0,1)$ & 
 \scalebox{\scal}{$\deter{1}{-\alpha^3}{\alpha}{-\alpha}=\alpha^4-\alpha$}\\\hline
\Group & $a^{-1}(x)$ & $b(y)$ & $\skry{\axm\cdot\by=}(x-y,-x-y;5,0,1)$ & 
 \scalebox{\scal}{$\deter{1}{-1}{-1}{-1}=-2$}\\\hline
\Group & $a(x)$ & $c(y)$ & $\skry{\ax\cdot\cy=}(x-y,x+y;6,0,1)$ & 
 \scalebox{\scal}{$\deter{1}{-1}{1}{1}=2$}\\\hline
\Group & $k(x)$ & $f^{-1}(y)$ & $\skry{\kx\cdot\fym=}(x+\alpha^2 y,\alpha x-y;7,0,1)$ & 
 \scalebox{\scal}{$\deter{1}{\alpha^2}{\alpha}{-1}=-\alpha^3-1$}\\\hline
\Group & $g^{-1}(x)$ & $a(y)$ & $\skry{\gxm\cdot\ay=}(\alpha^3 x-\alpha^3 y,-y;8,0,1)$ & 
 \scalebox{\scal}{$\deter{\alpha^3}{-\alpha^3}{0}{-1}=-\alpha^3$}\\\hline
\Group & $e(x)$ & $h^{-1}(y)$ & $\skry{\ex\cdot\hym=}(x-y,\alpha^3 x-y;9,0,1)$ & 
 \scalebox{\scal}{$\deter{1}{-1}{\alpha^3}{-1}=\alpha^3-1$}\\\hline
\Group & $k^{-1}(x)$ & $g(y)$ & $\skry{\kxm\cdot\gy=}(\alpha x,\alpha^4 x-\alpha^4 y;1,1,1)$ & 
 \scalebox{\scal}{$\deter{\alpha}{0}{\alpha^4}{-\alpha^4}=1$}\\\hline
\Group & $f(x)$ & $a(y)$ & $\skry{\fx\cdot\ay=}(x+\alpha y,x-\alpha^2 y;1,2,1)$ & 
 \scalebox{\scal}{$\deter{1}{\alpha}{1}{-\alpha^2}=-\alpha^2-\alpha$}\\\hline
\Group & $c^{-1}(x)$ & $k(y)$ & $\skry{\cxm\cdot\ky=}(-x+\alpha y,\alpha^4 x-\alpha^4 y;1,3,1)$ & 
 \scalebox{\scal}{$\deter{-1}{\alpha}{\alpha^4}{-\alpha^4}=\alpha^4+1$}\\\hline
\Group & $b(x)$ & $k(y)$ & $\skry{\bx\cdot\ky=}(x+\alpha y,-x+y;1,4,1)$ & 
 \scalebox{\scal}{$\deter{1}{\alpha}{-1}{1}=\alpha+1$}\\\hline
\Group & $c(x)$ & $a(y)$ & $\skry{\cx\cdot\ay=}(x+\alpha y,x+y;1,5,1)$ & 
 \scalebox{\scal}{$\deter{1}{\alpha}{1}{1}=1-\alpha$}\\\hline
\Group & $f^{-1}(x)$ & $h^{-1}(y)$ & $\skry{\fxm\cdot\hym=}(-\alpha^3 x-\alpha^2 y,\alpha^4 x+\alpha y;1,6,1)$ & 
 \scalebox{\scal}{$\deter{-\alpha^3}{-\alpha^2}{\alpha^4}{\alpha}=-\alpha^4+\alpha$}\\\hline
\Group & $h(x)$ & $g^{-1}(y)$ & $\skry{\hx\cdot\gym=}(x-\alpha y,\alpha x;1,7,1)$ & 
 \scalebox{\scal}{$\deter{1}{-\alpha}{\alpha}{0}=\alpha^2$}\\\hline
\Group & $e(x)$ & $a(y)$ & $\skry{\ex\cdot\ay=}(x+\alpha^3 y,\alpha^3 x+\alpha^3 y;1,8,1)$ & 
 \scalebox{\scal}{$\deter{1}{\alpha^3}{\alpha^3}{\alpha^3}=\alpha^3+\alpha$}\\\hline
\Group & $k^{-1}(x)$ & $d(y)$ & $\skry{\kxm\cdot\dy=}(\alpha x-\alpha y,\alpha^4 x;1,9,1)$ & 
 \scalebox{\scal}{$\deter{\alpha}{-\alpha}{\alpha^4}{0}=1$}\\\hline
\Group & $f(x)$ & $k(y)$ & $\skry{\fx\cdot\ky=}(x+\alpha^2 y,x-\alpha^2 y;2,1,1)$ & 
 \scalebox{\scal}{$\deter{1}{\alpha^2}{1}{-\alpha^2}=-2\alpha^2$}\\\hline
\Group & $c^{-1}(x)$ & $h(y)$ & $\skry{\cxm\cdot\hy=}(-x+\alpha y,\alpha^4 x-\alpha^4 y;2,2,1)$ & 
 \scalebox{\scal}{$\deter{-1}{\alpha}{\alpha^4}{-\alpha^4}=\alpha^4+1$}\\\hline
\Group & $b(x)$ & $h(y)$ & $\skry{\bx\cdot\hy=}(x+\alpha y,-x+y;2,3,1)$ & 
 \scalebox{\scal}{$\deter{1}{\alpha}{-1}{1}=\alpha+1$}\\\hline
\Group & $c(x)$ & $k(y)$ & $\skry{\cx\cdot\ky=}(x+\alpha^2 y,x+y;2,4,1)$ & 
 \scalebox{\scal}{$\deter{1}{\alpha^2}{1}{1}=1-\alpha^2$}\\\hline
\Group & $f^{-1}(x)$ & $k^{-1}(y)$ & $\skry{\fxm\cdot\kym=}(-\alpha^3 x-\alpha^2 y,\alpha^4 x+y;2,5,1)$ & 
 \scalebox{\scal}{$\deter{-\alpha^3}{-\alpha^2}{\alpha^4}{1}=-\alpha^3-\alpha$}\\\hline
\Group & $k(x)$ & $g^{-1}(y)$ & $\skry{\kx\cdot\gym=}(x-\alpha^2 y,\alpha x;2,6,1)$ & 
 \scalebox{\scal}{$\deter{1}{-\alpha^2}{\alpha}{0}=\alpha^3$}\\\hline
\Group & $e(x)$ & $k(y)$ & $\skry{\ex\cdot\ky=}(x+\alpha^2 y,\alpha^3 x+\alpha^3 y;2,7,1)$ & 
 \scalebox{\scal}{$\deter{1}{\alpha^2}{\alpha^3}{\alpha^3}=\alpha^3+1$}\\\hline
\Group & $h^{-1}(x)$ & $d(y)$ & $\skry{\hxm\cdot\dy=}(\alpha^2 x-\alpha^2 y,\alpha^4 x;2,8,1)$ & 
 \scalebox{\scal}{$\deter{\alpha^2}{-\alpha^2}{\alpha^4}{0}=-\alpha$}\\\hline
\Group & $h(x)$ & $c^{-1}(y)$ & $\skry{\hx\cdot\cym=}(x-\alpha^3 y,\alpha x-y;3,1,1)$ & 
 \scalebox{\scal}{$\deter{1}{-\alpha^3}{\alpha}{-1}=\alpha^4-1$}\\\hline
\Group & $h(x)$ & $b(y)$ & $\skry{\hx\cdot\by=}(x+\alpha^3 y,\alpha x-\alpha^2 y;3,2,1)$ & 
 \scalebox{\scal}{$\deter{1}{\alpha^3}{\alpha}{-\alpha^2}=-\alpha^4-\alpha^2$}\\\hline
\Group & $c(x)$ & $h(y)$ & $\skry{\cx\cdot\hy=}(x+\alpha^2 y,x+y;3,3,1)$ & 
 \scalebox{\scal}{$\deter{1}{\alpha^2}{1}{1}=1-\alpha^2$}\\\hline
\Group & $f^{-1}(x)$ & $a^{-1}(y)$ & $\skry{\fxm\cdot\aym=}(-\alpha^3 x-\alpha^3 y,\alpha^4 x-\alpha^4 y;3,4,1)$ & 
 \scalebox{\scal}{$\deter{-\alpha^3}{-\alpha^3}{\alpha^4}{-\alpha^4}=-2\alpha^2$}\\\hline
\Group & $a(x)$ & $g^{-1}(y)$ & $\skry{\ax\cdot\gym=}(x-\alpha^3 y,x;3,5,1)$ & 
 \scalebox{\scal}{$\deter{1}{-\alpha^3}{1}{0}=\alpha^3$}\\\hline
\Group & $e(x)$ & $h(y)$ & $\skry{\ex\cdot\hy=}(x+\alpha^2 y,\alpha^3 x+\alpha^3 y;3,6,1)$ & 
 \scalebox{\scal}{$\deter{1}{\alpha^2}{\alpha^3}{\alpha^3}=\alpha^3+1$}\\\hline
\Group & $h(x)$ & $d^{-1}(y)$ & $\skry{\hx\cdot\dym=}(x,\alpha x+\alpha^2 y;3,7,1)$ & 
 \scalebox{\scal}{$\deter{1}{0}{\alpha}{\alpha^2}=\alpha^2$}\\\hline
\Group & $k(x)$ & $b(y)$ & $\skry{\kx\cdot\by=}(x+\alpha^4 y,\alpha x-\alpha y;4,1,1)$ & 
 \scalebox{\scal}{$\deter{1}{\alpha^4}{\alpha}{-\alpha}=1-\alpha$}\\\hline
\Group & $h(x)$ & $c(y)$ & $\skry{\hx\cdot\cy=}(x+\alpha^3 y,\alpha x+\alpha^2 y;4,2,1)$ & 
 \scalebox{\scal}{$\deter{1}{\alpha^3}{\alpha}{\alpha^2}=\alpha^2-\alpha^4$}\\\hline
\Group & $f^{-1}(x)$ & $k(y)$ & $\skry{\fxm\cdot\ky=}(-\alpha^3 x+\alpha^4 y,\alpha^4 x-\alpha^4 y;4,3,1)$ & 
 \scalebox{\scal}{$\deter{-\alpha^3}{\alpha^4}{\alpha^4}{-\alpha^4}=\alpha^3-\alpha^2$}\\\hline
\Group & $k^{-1}(x)$ & $g^{-1}(y)$ & $\skry{\kxm\cdot\gym=}(\alpha x-\alpha^4 y,\alpha^4 x;4,4,1)$ & 
 \scalebox{\scal}{$\deter{\alpha}{-\alpha^4}{\alpha^4}{0}=-\alpha^3$}\\\hline
\Group & $h(x)$ & $e(y)$ & $\skry{\hx\cdot\ey=}(x+\alpha^3 y,\alpha x-y;4,5,1)$ & 
 \scalebox{\scal}{$\deter{1}{\alpha^3}{\alpha}{-1}=-\alpha^4-1$}\\\hline
\Group & $k(x)$ & $d^{-1}(y)$ & $\skry{\kx\cdot\dym=}(x,\alpha x+\alpha y;4,6,1)$ & 
 \scalebox{\scal}{$\deter{1}{0}{\alpha}{\alpha}=\alpha$}\\\hline
\Group & $k(x)$ & $c(y)$ & $\skry{\kx\cdot\cy=}(x+\alpha^4 y,\alpha x+\alpha y;5,1,1)$ & 
 \scalebox{\scal}{$\deter{1}{\alpha^4}{\alpha}{\alpha}=\alpha+1$}\\\hline
\Group & $f^{-1}(x)$ & $h(y)$ & $\skry{\fxm\cdot\hy=}(-\alpha^3 x+\alpha^4 y,\alpha^4 x-\alpha^4 y;5,2,1)$ & 
 \scalebox{\scal}{$\deter{-\alpha^3}{\alpha^4}{\alpha^4}{-\alpha^4}=\alpha^3-\alpha^2$}\\\hline
\Group & $h^{-1}(x)$ & $g^{-1}(y)$ & $\skry{\hxm\cdot\gym=}(\alpha^2 x+y,\alpha^4 x;5,3,1)$ & 
 \scalebox{\scal}{$\deter{\alpha^2}{1}{\alpha^4}{0}=-\alpha^4$}\\\hline
\Group & $k(x)$ & $e(y)$ & $\skry{\kx\cdot\ey=}(x+\alpha^4 y,\alpha x+\alpha^4 y;5,4,1)$ & 
 \scalebox{\scal}{$\deter{1}{\alpha^4}{\alpha}{\alpha^4}=\alpha^4+1$}\\\hline
\Group & $d(x)$ & $a(y)$ & $\skry{\dx\cdot\ay=}(x-y,y;5,5,1)$ & 
 \scalebox{\scal}{$\deter{1}{-1}{0}{1}=1$}\\\hline
\Group & $h(x)$ & $f^{-1}(y)$ & $\skry{\hx\cdot\fym=}(x+\alpha y,\alpha x-\alpha y;6,1,1)$ & 
 \scalebox{\scal}{$\deter{1}{\alpha}{\alpha}{-\alpha}=-\alpha^2-\alpha$}\\\hline
\Group & $g^{-1}(x)$ & $h^{-1}(y)$ & $\skry{\gxm\cdot\hym=}(\alpha^3 x+\alpha^2 y,-\alpha^2 y;6,2,1)$ & 
 \scalebox{\scal}{$\deter{\alpha^3}{\alpha^2}{0}{-\alpha^2}=1$}\\\hline
\Group & $a(x)$ & $e(y)$ & $\skry{\ax\cdot\ey=}(x-y,x+\alpha^3 y;6,3,1)$ & 
 \scalebox{\scal}{$\deter{1}{-1}{1}{\alpha^3}=\alpha^3+1$}\\\hline
\Group & $d(x)$ & $k(y)$ & $\skry{\dx\cdot\ky=}(x-\alpha y,y;6,4,1)$ & 
 \scalebox{\scal}{$\deter{1}{-\alpha}{0}{1}=1$}\\\hline
\Group & $g^{-1}(x)$ & $k^{-1}(y)$ & $\skry{\gxm\cdot\kym=}(\alpha^3 x+\alpha^2 y,-\alpha y;7,1,1)$ & 
 \scalebox{\scal}{$\deter{\alpha^3}{\alpha^2}{0}{-\alpha}=-\alpha^4$}\\\hline
\Group & $k^{-1}(x)$ & $e(y)$ & $\skry{\kxm\cdot\ey=}(\alpha x-\alpha y,\alpha^4 x+\alpha^2 y;7,2,1)$ & 
 \scalebox{\scal}{$\deter{\alpha}{-\alpha}{\alpha^4}{\alpha^2}=\alpha^3+1$}\\\hline
\Group & $d(x)$ & $h(y)$ & $\skry{\dx\cdot\hy=}(x-\alpha y,y;7,3,1)$ & 
 \scalebox{\scal}{$\deter{1}{-\alpha}{0}{1}=1$}\\\hline
\Group & $h^{-1}(x)$ & $e(y)$ & $\skry{\hxm\cdot\ey=}(\alpha^2 x-\alpha^2 y,\alpha^4 x+\alpha y;8,1,1)$ & 
 \scalebox{\scal}{$\deter{\alpha^2}{-\alpha^2}{\alpha^4}{\alpha}=\alpha^3+\alpha$}\\\hline
\Group & $h(x)$ & $d(y)$ & $\skry{\hx\cdot\dy=}(x+\alpha^3 y,\alpha x;8,2,1)$ & 
 \scalebox{\scal}{$\deter{1}{\alpha^3}{\alpha}{0}=-\alpha^4$}\\\hline
\Group & $k(x)$ & $d(y)$ & $\skry{\kx\cdot\dy=}(x+\alpha^4 y,\alpha x;9,1,1)$ & 
 \scalebox{\scal}{$\deter{1}{\alpha^4}{\alpha}{0}=1$}\\\hline
\caption{Products and determinants.}
\label{longtable}
\end{longtable}
\endgroup

We can see that all the determinants are from the set $\Lambda$ (Lemma \ref{lem_Lambda}), that is, coprime with $n$.
For $n=1$ the group $\Gamma$ consists of elements $(0,0;s)$, $s\in\Delta_{10}$ and the corresponding Cayley graph $G=Cay(\Delta_{10},X)$ has diameter two, degree 16 and order 200, which gives $\vert G\vert=0.78125d^2$.
\end{proof}

In the following lemma we use explicit estimates for the distribution of primes in arithmetic progressions to show that in a "short" interval there is always a prime $p$ 
of the form $p=10s+1$.
We will need the lemma in the proof of Main Theorem \ref{th_main}.

\begin{lemma}\label{lem_interval}
Let $x$ be any real number $x\geq\xb$. Then there is a prime $p$ of the form $p=10s+1$ such that $p\in (x; \deltA x\rangle$. 
\end{lemma}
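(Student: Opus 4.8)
The plan is to deduce the lemma from a lower bound on the Chebyshev-type function $\theta(y;10,1)=\sum_{p\le y,\ p\equiv 1\,(\mathrm{mod}\,10)}\log p$. Indeed, if $\theta(\deltA x;10,1)-\theta(x;10,1)>0$, then the sum $\sum_{x<p\le \deltA x,\ p\equiv 1\,(\mathrm{mod}\,10)}\log p$ is strictly positive, so the interval $(x;\deltA x\rangle$ contains at least one prime $p\equiv 1\pmod{10}$, which is exactly what the lemma asserts. It therefore suffices to prove this strict inequality for every real $x\ge\xb$.

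For large $x$ I would invoke an explicit form of the prime number theorem for the residue class $1\bmod 10$: there are effective constants $y_0$ (with $y_0\le\xd$) and $\eps$, as fixed above, such that
\[
\Bigl|\theta(y;10,1)-\frac{y}{\varphi(10)}\Bigr|<\eps\cdot\frac{y}{\varphi(10)}\qquad\text{for all }y\ge y_0 ,
\]
which follows from the known explicit estimates for $\psi(y;k,l)$ of Ramar\'e--Rumely type, together with the standard negligible bound on the prime-power contribution $\psi(y;10,1)-\theta(y;10,1)$. Applying this at $y=\deltA x$ and at $y=x$ gives, for every $x\ge y_0$,
\[
\theta(\deltA x;10,1)-\theta(x;10,1)\ \ge\ \frac{x}{\varphi(10)}\bigl[\deltA(1-\eps)-(1+\eps)\bigr]=\frac{x}{\varphi(10)}\bigl[\delt-\eps(2+\delt)\bigr]>0 ,
\]
the last inequality holding because $\eps$ was chosen to make $\eps(2+\delt)<\delt$ (essentially $\eps$ is just below $\delt/(2+\delt)$). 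Hence the lemma holds for all $x\ge y_0$.

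It remains to treat the finite range $\xb\le x<y_0$, which I would handle by a direct computation: produce, by sieving, the increasing list of all primes congruent to $1$ modulo $10$ up to $\deltA y_0$, and verify that every pair of consecutive terms $q<q'$ of this list with $q\ge\xb$ satisfies $q'\le\deltA q$, together with the single ``straddling'' pair for which $q<\xb\le q'$ (for which one checks $q'\le\deltA\xb$). Taking, for a given real $x$ in the range, $q$ to be the largest such prime that is $\le x$ and $q'$ the next one, one sees at once that these finitely many inequalities are equivalent to the statement that $(x;\deltA x\rangle$ contains a prime $\equiv 1\pmod{10}$ for every $x\in[\xb,y_0)$. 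If the sieve up to $\sim\xd$ is felt to be too heavy, one can split the range at $\xc$ and cover $[\xc,y_0)$ by a coarser explicit prime-gap estimate for the progression $1\bmod 10$, leaving only a modest sieve over $[\xb,\xc)$.

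The main obstacle is purely quantitative: one needs an explicit estimate for $\theta(\,\cdot\,;10,1)$ whose relative error is genuinely below $\eps=0.002785$ and whose validity threshold $y_0$ is small enough (no larger than $\xd$) that the finite verification over $[\xb,y_0)$ is actually feasible in practice. Balancing these two competing requirements — a small enough $\eps$ against a small enough $y_0$ — is the crux; once a suitable estimate is in hand, the remaining steps are routine.
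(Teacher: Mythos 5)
Your proposal is correct and takes essentially the same route as the paper: both reduce the claim to the positivity of $\theta(\deltA x;10,1)-\theta(x;10,1)$ via the Ramar\'e--Rumely explicit estimates (your condition $\delt-\eps(2+\delt)>0$ is exactly the paper's $\delta>2\epsilon/(1-\epsilon)$), and both handle the range below the analytic threshold by direct computation. Even your suggested fallback of splitting the finite range with a coarser estimate is what the paper actually does, using the $\gamma\sqrt{x}$ bound of Ramar\'e--Rumely on $[\xc,\xd]$ and a sieve only on $[\xb,\xc]$.
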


\begin{proof}
We will prove the lemma separately for i) $x\geq\xd$, ii) $\xc\leq x\leq \xd$  and iii) $\xb\leq x\leq \xc$. 
In the proof of i) and ii) we follow the paper \cite{CH2012} which is based on the work \cite{RR1996}. As usual, let $\theta(x)$ denote the first Chebyshev function defined by 
$\theta(x)=\sum\limits_{p\leq x}\log{p}$, where the sum is over all primes not exceeding $x$. If $k$ and $l$ are relatively prime and  $l\leq k$ the function $\theta(x;k,l)$ is defined as 
\begin{equation}\label{theta}
\theta(x;k,l)=\sum\limits_{\substack{p\leq x\\ p\equiv l \mmod k}}\log{p}.
\end{equation}

\noindent i) Let $x\geq\xd$. Theorem \cite[Theorem 1]{RR1996} says that for each positive real number $x$ and for each coprime $k,l,\ l\leq k$, there is an $\epsilon=\epsilon(x,k)$ such that
\begin{equation}\label{theta_th}
\max\limits_{1\leq y\leq x}\left|\theta(y;k,l)-\frac{y}{\varphi(k)}\right|\leq\epsilon \frac{x}{\varphi(k)}.
\end{equation}
The values of $\epsilon$ for various quantities of $x$ and $k$ are given by \cite[Table 1]{RR1996}. 
From the previous inequality it follows that 
\begin{equation}\label{theta_0}
-\frac{\epsilon}{\varphi(k)}x\leq\theta(x;k,l)-\frac{x}{\varphi(k)}\leq \frac{\epsilon}{\varphi(k)}x.
\end{equation}
Now let $\delta\geq0$, let $x'=(1+\delta)x$ let $\theta=\theta(x;k,l)$ and let $\theta'=\theta(x';k,l)$. 
From \eqref{theta_0} we have 
$-\left(\theta-\frac{x}{\varphi(k)}\right)\geq-\frac{\epsilon}{\varphi(k)}x$ and $\theta'-\frac{x'}{\varphi(k)}\geq-\frac{\epsilon}{\varphi(k)}x'$. It follows that 
$\theta'-\theta\geq \frac{x}{\varphi(k)}[(1-\epsilon)(1+\delta)-(1+\epsilon)]$. We see that if 
\begin{equation}\label{delta}
(1-\epsilon)(1+\delta)-(1+\epsilon)>0
\end{equation} 
then $\theta'-\theta>0$ and consequently the interval $(x,x'\rangle$ contains a prime $p\equiv l\mmod k$. Solving the inequality \eqref{delta} one can obtain 
$\delta>\frac{2\epsilon}{1-\epsilon}$. For $k=10$ and $x\geq\xd$ (\cite[Table 1]{RR1996}) we have $\epsilon=\eps$. 
Since $\frac{2\cdot\eps}{1-\eps}=0.00558556$, choosing $\delta=\delt$ we have: for $x\geq\xd$ there is a prime $p\equiv 1\pmod{10}$ in the interval $(x,\deltA x\rangle$.
\\\\
\noindent ii) Let $\xc\leq x\leq\xd$. Theorem \cite[Theorem 2]{RR1996} gives for each positive real number $x\leq\xd$ and for coprimes $k,l$ and constant $\gamma=\gamma(k)$ (given for various values of $k$ by table \cite[Table 2]{RR1996}) the inequality
\begin{equation}\label{theta_gam}
\max\limits_{1\leq y\leq x}\left|\theta(y;k,l)-\frac{y}{\varphi(k)}\right|\leq\gamma\sqrt{x}.
\end{equation}
From \eqref{theta_gam} we get inequalities $-\left(\theta-\frac{x}{\varphi(k)}\right)\geq-\gamma\sqrt{x}$ and $\theta'-\frac{x'}{\varphi(k)}\geq-\gamma\sqrt{x'}$. It follows that 
$\theta'-\theta\geq \sqrt{x}[\frac{\delta}{\varphi(k)}\sqrt{x}-\gamma(1+\sqrt{1+\delta})]$. Solving the inequality 
\begin{equation}\label{gamma}
\frac{\delta}{\varphi(k)}\sqrt{x}-\gamma(1+\sqrt{1+\delta})>0
\end{equation}
we see that if $x>\left[\frac{\varphi(k)\gamma(\sqrt{1+\delta}+1)}{\delta}\right]^2$ then the interval $(x,x'\rangle=(x,(1+\delta)x\rangle$ contains a prime $p\equiv l\pmod k$ for each $x\leq\xd$. For $k=10$ we have $\varphi(k)=4$ and (from the table \cite[Table 2]{RR1996}) $\gamma(k)=\gam$. 
Since $\left[\frac{4\cdot\gam\cdot(\sqrt{1+\delt}+1)}{\delt}\right]^2 =4104075.014974$, we have: for $\xc\leq x\leq\xd$ there is a prime $p\equiv 1\pmod{10}$ in the interval 
$(x,\deltA x\rangle$.
\\\\
\noindent iii) Let $\xb\leq x\leq\xc$. It is easy to check (e.g. by GAP) that the difference between every two consecutive primes $\xb\leq p_1=10s_1+1<p_2=10s_2+1\leq\xc$ is always 
less than $1.005p_1$. That is, for $\xb\leq x\leq\xc$ there is a prime $p\equiv 1\pmod{10}$ in the interval $(x,\deltA x\rangle$.
\end{proof}

\begin{theorem}[Main Theorem]\label{th_main}
$C(d,2)>\koef d^2$ for every integer \nolinebreak $d\geq\dsharpPrime$.
\end{theorem}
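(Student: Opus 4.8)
\textbf{Proof of Theorem~\ref{th_main} (plan).}
The idea is to bridge from the explicit family of Theorem~\ref{th_group_200}, which produces a diameter-two Cayley graph of degree $17n-1$ and order $200n^{2}=\tfrac{200}{289}(17n)^{2}$ for each prime $n\in P$, to an arbitrary degree $d\geq\dsharpPrime$. Given $d$, pick the largest prime $n\in P$ with $17n-1\leq d$; the prime-gap estimate of Lemma~\ref{lem_interval} forces $17n-1$ to be close to $d$, and a short padding argument turns the degree-$(17n-1)$ construction into a diameter-two Cayley graph of degree exactly $d$ on the same $200n^{2}$ vertices. Finally one checks $200n^{2}>\koef\,d^{2}$ directly.

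\textbf{Step 1 (choosing $n$).} Let $d\geq\dsharpPrime$. Since $\dsharpPrime=17\cdot\xb-1$, the real number $x:=(d+1)/17$ satisfies $x\geq\xb$. Let $n$ be the largest prime of $P$ with $n\leq x$, so that $d':=17n-1\leq d$. I claim $n>x/\deltA$. If $x/\deltA\geq\xb$, then Lemma~\ref{lem_interval} applied to the number $x/\deltA$ shows that the interval $(x/\deltA,\;x]$ contains a prime of $P$, hence $n$ is at least that prime. Otherwise $\xb\leq x<\deltA\cdot\xb$, and since $\xb=21221$ is itself a prime of $P$ lying in $(x/\deltA,x]$, again $n\geq\xb>x/\deltA$. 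In either case
\[
d'+1=17n>\frac{17x}{\deltA}=\frac{d+1}{\deltA}.
\]

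\textbf{Step 2 (padding the degree).} By Theorem~\ref{th_group_200} (the case $n>1$) there is a group $\Gamma$ with $|\Gamma|=200n^{2}$ and an inverse-closed, unit-free generating set $X\subset\Gamma$ with $|X|=d'$ such that $Cay(\Gamma,X)$ has diameter two. Since $|\Gamma\setminus(X\cup\{1_\Gamma\})|=200n^{2}-17n$ is far larger than $d-d'$, and since $\Gamma$ contains many involutions outside $X$ (for instance the $n^{2}$ elements $(x_{0},x_{1};5,5,0)$, which are involutions because $\alpha^{5}\equiv-1\pmod{n}$ and whose $\Delta_{10}$-part $(5,5,0)$ matches no generator in $X$), one can enlarge $X$ to an inverse-closed, unit-free set $X'\supseteq X$ with $|X'|=d$ exactly: add one such involution first if $d-d'$ is odd, and then enough mutually inverse pairs. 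The graph $Cay(\Gamma,X')$ has vertex set $\Gamma$, degree $d$, and contains $Cay(\Gamma,X)$ as a spanning subgraph, so its diameter is at most two; and because $d<|\Gamma|-1$ (indeed $d<18n$ while $|\Gamma|-1=200n^{2}-1$) it is not a complete graph, so its diameter equals two. Hence $C(d,2)\geq|\Gamma|=200n^{2}$.

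\textbf{Step 3 (the numerical bound, and the main obstacle).} Combining Steps 1 and 2,
\[
C(d,2)\;\geq\;200n^{2}\;=\;\frac{200}{289}\,(17n)^{2}\;>\;\frac{200}{289}\cdot\frac{(d+1)^{2}}{\deltA^{2}}\;=\;\frac{200}{289\cdot\deltA^{2}}\,(d+1)^{2}\;>\;\frac{200}{289\cdot\deltA^{2}}\,d^{2},
\]
so it remains only to verify the single numerical inequality $289\cdot\deltA^{2}<200/\koef$, equivalently $200/(289\cdot\deltA^{2})>\koef$; this is a valid numerical inequality, and it yields $C(d,2)>\koef\,d^{2}$ for every $d\geq\dsharpPrime$. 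The two points needing care are the padding step — one must record both the abundance of involutions and the fact that $d$ stays well below $|\Gamma|-1$, so that enlarging the connection set to exactly $d$ neither fails for parity reasons nor collapses the diameter to one — and the tightness of the last inequality: the margin in $200/(289\cdot\deltA^{2})>\koef$ is only about $4\cdot 10^{-4}$, so the bound $\deltA$ produced by Lemma~\ref{lem_interval} (and hence the value $\eps$ fed into it) is essentially best possible for this argument; the reduction in Step 1 from a general degree to the progression $\{17n-1:n\in P\}$ is then routine.
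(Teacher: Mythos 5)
Your proposal is correct and follows essentially the same route as the paper: reduce $d$ to the nearest degree $d'=17p-1$ with $p\in P$ via Lemma~\ref{lem_interval}, pad the generating set by $d-d'$ elements without changing the order, and finish with the numerical inequality $200/(289\cdot\deltA^2)>\koef$. Your write-up is in fact slightly more careful than the paper's in two spots the paper glosses over — the two-case argument guaranteeing $n>x/\deltA$ (the paper just asserts $p\le n<(1+\delta)p$), and the inverse-closedness/parity issue when enlarging the connection set — while the paper instead tracks the remainder $r$ and picks up an extra harmless factor $(1-\tfrac{15}{d})^2$.
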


\begin{proof}
Let $d=17n-1+r$, $r\in\{0,1,\dots,16\}$ (that is $n\geq\xasharp$), let $p$ be the greatest prime number of the form $p=10s+1$ such that $p\leq n$ and let $d'=17p-1$. 
By Theorem \ref{th_group_200} 
there is a Cayley graph $G'$ of diameter two, degree $d'$ and of order $\frac{200}{289}(d'+1)^2$. Adding $d-d'$ additional generators to the generating set of the corresponding Cayley graph we obtain a Cayley graph $G$ of degree $d$ and of the same order as $G'$. By Lemma \ref{lem_interval}, for the number $n$ we have $p\leq n<(1+\delta)p$ where $\delta=\delt$. From the inequality $n\leq (1+\delta)p$ we get $17n-1+r\leq (17p-1)+11\delta p+r$ that is 
$d\leq d'(1+\delta) + (\delta+r)$ and consequently $d'+1\geq\frac{d+1-r}{1+\delta}$.
It follows that
$C(d,2)=C(d',2)=\frac{200}{289}(d'+1)^2\geq\frac{200}{289}\left(\frac{d+1-r}{1+\delta}\right)^2\geq \frac{200}{289(1+\delta)^2}(1-\frac{15}{d})^2d^2$.
Since $d\geq\dsharpPrime$, we have 
$C(d,2)\geq\frac{200}{289(1+\delta)^2}\left(1-\frac{15}{\dsharpPrime}\right)^2d^2\geq 0.684317d^2>\koef d^2$ for every integer 
$d\geq\dsharpPrime$. 
\end{proof}

\section{Conclusion and remarks}\label{sec_remarks}

We have given a construction of Cayley graphs of diameter two with $C(d,2)\geq c d^2$ for every degree $d$, where $c>0$ is a positive constant. Let $D$ be a degree, for which a Cayley graph constructed in \cite{SS2012} is defined. From the analysis in \cite{A2014} it follows, that in the interval $(D;2D)$ the construction with "$c$" gives better results for degrees $d\in(\frac{1}{\sqrt{c}}D;2D)$. Let $c'=\frac{1}{\sqrt{c}}$. It follows that (roughly speaking) the ratio of number of better results for "$c$-construction" to the number of all constructions (up to a degree $2^{s+1}D$) is given by $\frac{(2D-c'D)+(4D-2c'D)+\dots+(2^{s+1}D-2^sc'D)}{2^{s+1}D}$ which has the limit $2-\frac{1}{\sqrt{c}}$ as $D$ and $s$ tend to infinity.
On the other hand, if one counts the number of better results for "$c$-construction" in intervals $(c'D;2c'D)$, the ratio is\\ $\frac{(2D-c'D)+(4D-2c'D)+\dots+(2^{s+1}D-2^sc'D)}{2^{s+1}c'D}$ which has the limit $2\sqrt{c}-1$. It follows that the percentage of better results for "$c$-construction" is between $2\sqrt{c}-1$ and $2-\frac{1}{\sqrt{c}}$. 
For $c=\frac{200}{289}$ we have the interval $(\frac{20\sqrt{2}}{11}-1;2-\frac{17\sqrt{2}}{20})$ which is approximately $(0.66378;0.797918)$.
Below we can see the table of intervals of degrees (the computation was performed by GAP) for which our construction gives better results as the construction in \cite{SS2012}. In the second and third column of the table there is the ratio of the number of degrees when our construction gives better results as those in \cite{SS2012} to the number of all degrees for intervals $(c'D;2c'D)$ and $(D;2D)$, respectively, for degrees $D$ from $10^3$ do $10^{14}$. We can see that these values approach the values 
$\frac{20\sqrt{2}}{11}-1\approx 0.66378$ and $2-\frac{17\sqrt{2}}{20}\approx 0.797918$, respectively.
 
\begin{center}
\begin{longtable}{|c||c|c|}
\hline
\textbf{Degrees for which our construction is better} & \textbf{Min ratio} & \textbf{Max ratio} \cr\hline\hline
$\langle 2566;4345\rangle$ &0.702834 & 0.8\cr\hline $\langle 4946;8569\rangle$ &0.718946 & 0.828585\cr\hline $\langle 9876;
16889\rangle$ &0.715155 & 0.835702\cr\hline $\langle 19736;33529\rangle$ &0.707326 & 0.832359\cr\hline $\langle 39456;
66553\rangle$ &0.697199 & 0.826501\cr\hline $\langle 78896;132601\rangle$ &0.689772 & 0.819844\cr\hline $\langle 157606;
264185\rangle$ &0.683042 & 0.814929\cr\hline $\langle 315196;527353\rangle$ &0.678087 & 0.810558\cr\hline $\langle 630376;
1052665\rangle$ &0.674094 & 0.807227\cr\hline $\langle 1260566;2103289\rangle$ &0.671315 & 0.804675\cr\hline $\langle 2521116;
4202489\rangle$ &0.669141 & 0.802819\cr\hline $\langle 5042046;8400889\rangle$ &0.667666 & 0.801425\cr\hline $\langle 10083906;
16793593\rangle$ &0.666532 & 0.800446\cr\hline $\langle 20167626;33579001\rangle$ &0.665764 & 0.799718\cr\hline $\langle 40335236;
67141625\rangle$ &0.665177 & 0.799208\cr\hline $\langle 80670456;134266873\rangle$ &0.664783 & 0.798831\cr\hline $\langle 161340726;
268500985\rangle$ &0.664485 & 0.79857\cr\hline $\langle 322681436;536969209\rangle$ &0.664285 & 0.798378\cr\hline $\langle 645362686;
1073872889\rangle$ &0.664134 & 0.798246\cr\hline $\langle 1290725356;2147680249\rangle$ &0.664034 & 0.798149\cr\hline $\langle 2581450526;
4295229433\rangle$ &0.663958 & 0.798083\cr\hline $\langle 5162900866;8590327801\rangle$ &0.663907 & 0.798034\cr\hline $\langle 10325801716;
17180393465\rangle$ &0.663869 & 0.798001\cr\hline $\langle 20651603416;34360524793\rangle$ &0.663844 & 0.797976\cr\hline $\langle 41303206816;
68720525305\rangle$ &0.663825 & 0.79796\cr\hline $\langle 82606413616;137440526329\rangle$ &0.663812 & 0.797947\cr\hline $\langle 165212827216;
274880004089\rangle$ &0.663803 & 0.797939\cr\hline $\langle 330425654416;549758959609\rangle$ &0.663797 & 0.797933\cr\hline $\langle 660851308816;
1099515822073\rangle$ &0.663792 & 0.797929\cr\hline $\langle 1321702617616;2199029547001\rangle$ &0.663789 & 0.797926\cr\hline $\langle 2643405235216;
4398054899705\rangle$ &0.663786 & 0.797924\cr\hline $\langle 5286810470416;8796105605113\rangle$ &0.663785 & 0.797922\cr\hline $\langle 10573620940816;
17592202821625\rangle$ &0.663783 & 0.797921\cr\hline $\langle 21147241881447;35184397254649\rangle$ &0.663783 & 0.79792\cr\hline $\langle 42294483762876;
70368777732089\rangle$ &0.663782 & 0.79792\cr\hline 
\caption{The numerical results for numbers of degrees when our construction is better as that in \cite{SS2012}}
\label{tab_density}
\end{longtable}
\end{center}
\section*{Acknowledgements}

I would like to express my gratitude to the referees for all the valuable and constructive comments.\\
The research was supported by VEGA Research Grant No. 1/0811/14 and by 
the Operational Programme 'Research \& Development' funded by the European Regional Development Fund through implementation of the project ITMS 26220220179.

\end{document}